\newcommand{\wrlab}[1]{\label{#1}}
\newtheorem{thm}{THEOREM}[section]
\newtheorem{lem}[thm]{LEMMA} 
\newtheorem{cor}[thm]{COROLLARY} \newtheorem{prop}[thm]{PROPOSITION}
 \newtheorem{thm*}{THEOREM}[]
\newcommand{\tref}[1]{Theorem~\ref{#1}}
\newcommand{\cref}[1]{Corollary~\ref{#1}}
\newcommand{\pref}[1]{Proposition~\ref{#1}}
\newcommand{\lref}[1]{Lemma~\ref{#1}}
\def\R{{\mathbb R}}  \def\T{{\mathbb T}}
\def\scru{{\mathcal U}}  \def\scrw{{\mathcal W}}
\font\tenolde=eufm10 at 10pt
\font\sevenolde=eufm7
\font\fiveolde=eufm5
  \def\Lg{{\mathfrak g}}
\def\exp{\hbox{exp}}
\def\Stab{\hbox{Stab}}
\begin{document}
\bibliographystyle{alpha}

\title[Moving frames, prolongations of real algebraic actions]
{MOVING FRAMES AND PROLONGATIONS OF~REAL~ALGEBRAIC~ACTIONS}

\keywords{prolongation, moving frame, real algebraic geometry,\\ \phantom{x}\hskip1.6in dynamics}
\subjclass{57Sxx, 58A05, 58A20, 53A55}

\author{Scot Adams}
\address{School of Mathematics\\ University of Minnesota\\Minneapolis, MN 55455
\\ adams@math.umn.edu}

\date{June 11, 2013\qquad Printout date: \today}

\begin{abstract}
We prove that every real algebraic action on a smooth real algebraic variety
has a prolongation with a ``moving frame''.
\end{abstract}

\maketitle

 
\section{Introduction\wrlab{sect-intro}}

We prove (\pref{prop-free-proper-in-prolong} and
\cref{cor-free-proper-in-prolong}) that every real algebraic action on
a smooth real algebraic variety has a prolongation with a ``moving frame''.
That is, there is a nonempty invariant open subset of a jet bundle
on which the action is free and proper.

The basic theme here is well-understood: Real algebraic actions are
inherently uncomplicated from a dynamical point of view.
The type of recurrent behavior that often appears in smooth dynamics
is impossible to replicate in the algebraic setting.
To illustrate, fix an irrational flow on the two-dimensional torus $\T^2$.
Such a flow is real analytic, but {\it not} algebraic.
One of the orbits is the image of an injective homomorphism $\iota:\R\to\T^2$.
This map $\iota$ is nonproper in a very strong sense:
For {\it any} nonempty open subset $U$ of~$\T^2$,
its preimage $\iota^{-1}(U)$ is not precompact in $\R$.
This ``strong nonproperness'' persists to jet bundles and Cartesian powers,
so, generally, we cannot hope, in the real analytic category,
to prove the existence of a moving frame in such prolongations.

The situation is very different for algebraic actions.
For example, any nonconstant polynomial function $\R\to\R$ is proper.
In general, an injective real algebraic map is not always proper,
but comes close: Let
$$Z\qquad:=\qquad\{\,(x,y)\in\R^2\,\,|\,\,xy=1\,\}.$$
Then $(x,y)\mapsto x:Z\to\R$ is nonproper,
but even this map is ``somewhat proper''
in the sense that there is a nonempty open subset of $\R$
whose preimage is precompact in~$Z$.
\lref{lem-alg-implies-ae-proper} tells us that this ``somewhat properness''
appears in any real algebraic map with compact fibers.
\lref{lem-alg-implies-ae-proper} leads to \lref{lem-cptstab-proper}
which asserts that properness is ubiquitous in any real algebraic action
with compact stabilizers.
Note that \lref{lem-alg-implies-ae-proper} is also used to develop freeness:
See the proof of \lref{lem-free-on-open},
which depends on \lref{lem-force-bdd-punct-stabs},
which, in turn, depends on \lref{lem-alg-implies-ae-proper}.

The other main algebraic tool (besides \lref{lem-alg-implies-ae-proper} and \lref{lem-cptstab-proper})
that is used in this note is the Descending Chain Condition on real algebraic subgroups.
This DCC property is used to establish the existence of trivial stabilizers
in prolongation: See the proof of \lref{lem-triv-stab-in-prolong}.

The reader interested in learning more about moving frames may consult the Wikipedia entry with that title.
For freeness and properness in Cartesian powers, see the start of~\S2.3 of \cite{grom:rigtran}.
P.~Olver points to \cite{olver:movfrmsing}
as a reference on freeness of prolonged group actions.
For applications, Olver recommends \cite{olver:lectmovfrm} and \cite{mansf:practicalguide}.
For work on transitive algebraic pseudogroup actions, he suggests \cite{kruglyc:lietresse}.

The arguments appearing in the present work are all elementary,
and do not lean on any stratified structure theory for real algebraic actions.
Some of the results of this paper likely extend to all local fields,
but we focus here on $\R$.
We use the language of algebraic geometry and the basics of real algebraic geometry,
but keep to a minimum what we require of the reader in Lie theoretic knowledge.

\section{Some terminology and basic facts\wrlab{sect-terminology}}

If $X$ is a set and if
$K_1,K_2,\ldots$ are subsets of $X$,
then we will say that a subset $S$ of $X$ is {\bf $K_\bullet$-small} if,
for some integer $j\ge1$, $S\subseteq K_j$.

Let a group $G$ act on a set $X$.
Let $G^\times:=G\backslash\{1_G\}$.
For all $x\in X$, $\Stab_G(x):=\{g\in G\,|\,gx=x\}$
will denote the stabilizer in $G$ of $x$, and the set $(\Stab_G(x))\backslash\{1_G\}$
will be called the {\bf$G^\times$-stabilizer of~$x$}.

Let a locally compact Hausdorff topological group $G$ act on a set~$X$.
Give $G^\times:=G\backslash\{1_G\}$ the relative topology inherited from $G$.
For all $x\in X$, let $G_x^\times:=(\Stab_G(x))\backslash\{1_G\}$
be the $G^\times$-stabilizer of $x$.
Let $S\subseteq X$.
We will say that {\bf$S$ has bounded $G^\times$-stabilizers}
if the union $\displaystyle{\bigcup_{s\in S}\,G_s^\times}$
is a precompact subset of~$G^\times$.

We will use the term {\bf real algebraic variety} to mean the $\R$-points of an $\R$-variety.
It is given its locally compact second countable Hausdorff topology
induced by the standard locally compact second countable Hausdorff topology on $\R$.

Throughout this note, the terms open, closed, locally compact, precompact, connected,
locally closed and constructible are all used to refer only to Hausdorff topologies.
Notions related to Zariski topologies  will be adorned with $\R$,
{\it e.g.}, $\R$-open, $\R$-closed, $\R$-closure, {\it etc.}

Let $Z$ be a real algebraic variety.
Then $Z$ will be called {\bf smooth}
if its $\R$-closure is a smooth $\R$-variety
all of whose irreducible components have the same dimension;
in this case, $Z$ is naturally a $C^\omega$ manifold.
A subset $S$ of $Z$ is {\bf real algebraic} if it is the $\R$-points of
an $\R$-closed subset of the $\R$-closure of $Z$; then $S$ is a closed subset of $Z$.
The collection of real algebraic subsets of $Z$ has the DCC under inclusion.
A subset of $Z$ is {\bf affine open} if it is the $\R$-points of an $\R$-open affine
subset of the $\R$-closure of~$Z$.
A subset $S$ of $Z$ is {\bf semialgebraic}
if, for every affine open subset $Z_0$ of $Z$,
the coordinate image (in real Euclidean space) of $S\cap Z_0$ is semialgebraic.
Semialgebraic subsets are always constructible.
The Tarski-Seidenberg Theorem implies that
any countable semialgebraic subset is finite.

Let $X$ and $Y$ be real algebraic varieties.
A function $f:X\to Y$ will be said to be {\bf real algebraic} if $f$ is the
$\R$-points of an $\R$-morphism from the $\R$-closure of $X$ to the $\R$-closure of $Y$.
A real algebraic function $f:X\to Y$ is always $C^0$ and, if $X$ and $Y$ are smooth, then $f$ is $C^\omega$.
If $f:X\to Y$ is real algebraic, then, for all $y\in Y$,
the fiber $f^{-1}(y)$ is a real algebraic subset of $X$.
The image, under a real algebraic function $f$, of any semialgebraic subset of $X$ is,
by the Tarski-Seidenberg Theorem, a semialgebraic subset of~$Y$.

Let $Z$ be a real algebraic variety and let $z\in Z$.
Give $Z\backslash\{z\}$ the relative topology inherited from $Z$.
Generally, $Z\backslash\{z\}$ is not a real algebraic subset of $Z$.
However, by algebraic localization, $Z\backslash\{z\}$
naturally has a real algebraic variety structure that is compatible with its relative topology
and such that the inclusion map $Z\backslash\{z\}\to Z$ is real algebraic.

We will use the term {\bf real algebraic group}
to mean the $\R$-points of an algebraic $\R$-group.
Such a group is naturally a $C^\omega$ Lie group.
A subgroup of a real algebraic group $G$ is a real algebraic subset
iff it is the $\R$-points of an $\R$-subgroup of the $\R$-closure of $G$.

Let $G$ be a real algebraic group.
Let $V$ be a real algebraic variety.
An action of $G$ on $V$ is said to be {\bf real algebraic} if it is
the $\R$-points of an $\R$-action of the $\R$-closure of $G$ on the $\R$-closure of $V$.
Such an action is always $C^0$, and, if $V$ is smooth, then it is a $C^\omega$ action.
If $G$ acts real algebraically on $V$, then, for all $v\in V$,
the stabilizer $\Stab_G(v)$ in $G$ of $v$ is a real algebraic subgroup of $G$.

\section{Some group theory and dynamics\wrlab{sect-gp-thy-dyn}}

\begin{lem}\wrlab{lem-loc-ctbl-implies-ctbl}
Let $G$ be a second countable topological group.
Let $S$ be a subgroup of $G$
such that, for some open neighborhood $U$ in $G$ of~$1_G$,
the set $S\cap U$ is countable.
Then $S$ is countable.
\end{lem}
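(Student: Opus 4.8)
The plan is to cover $G$ by countably many translates of $U$, and observe that $S$ meets each translate in a set that is either empty or in bijection with a subset of $S\cap(U^{-1}U)$, which is countable.

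First I would record the standard fact that a second countable topological group is Lindelöf, hence the open cover $\{gU\,:\,g\in G\}$ of $G$ has a countable subcover; say $G=\bigcup_{n\in\N}g_nU$. Then $S=\bigcup_{n\in\N}(S\cap g_nU)$, so it suffices to show each $S\cap g_nU$ is countable. If $S\cap g_nU=\emptyset$ there is nothing to prove, so fix $s_n\in S\cap g_nU$. For any $s\in S\cap g_nU$ we have $s=g_nu$ and $s_n=g_nu_n$ with $u,u_n\in U$, so $s_n^{-1}s=u_n^{-1}u\in U^{-1}U$; thus $s_n^{-1}s\in S\cap(U^{-1}U)$. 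The map $s\mapsto s_n^{-1}s$ is injective on $S\cap g_nU$, so $|S\cap g_nU|\le|S\cap(U^{-1}U)|$.

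It then remains to check that $S\cap(U^{-1}U)$ is countable. Here I would use continuity of the group operations: the map $(x,y)\mapsto x^{-1}y$ is continuous and sends $(1_G,1_G)$ to $1_G$, so there is an open neighborhood $W$ of $1_G$ with $W^{-1}W\subseteq U$. Replacing $U$ by $W$ at the outset (note $S\cap W\subseteq S\cap U$ is still countable), we may assume from the start that $U^{-1}U\subseteq U$, whence $S\cap(U^{-1}U)\subseteq S\cap U$ is countable by hypothesis. Combining, $S$ is a countable union of countable sets, hence countable.

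The only mild subtlety — the ``main obstacle'', such as it is — is the bookkeeping that lets one pass from the given $U$ to a possibly smaller neighborhood whose self-product stays inside $U$; everything else is the routine Lindelöf-plus-pigeonhole argument. One could alternatively avoid shrinking $U$ by noting directly that $S\cap(U^{-1}U)$ is a countable union over $s\in S\cap U$ of the sets $(S\cap U)s$ (or by a symmetric pigeonhole on a countable subcover of $U^{-1}U$ by translates of $U$), but the shrinking device is the cleanest to write.
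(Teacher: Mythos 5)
Your proof is correct. It is the same basic strategy as the paper's --- second countability gives the Lindel\"of property, which yields a countable cover of (part of) $G$ by translates of $U$, each of which meets $S$ in a countable set --- but the paper executes it more economically by choosing the translating elements from $S$ itself. Namely, the paper covers $SU$ by the translates $\{sU : s\in S\}$ and extracts a countable subcover; since $s\in S$ and $S$ is a subgroup, one has $S\cap(sU)=s(S\cap U)$ on the nose, which is countable by hypothesis, and $S=S\cap(SU)$ finishes the argument. This makes the step you identify as the ``main obstacle'' --- passing through $S\cap(U^{-1}U)$ and shrinking $U$ to a $W$ with $W^{-1}W\subseteq U$ --- entirely unnecessary. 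Your shrinking device is sound (and your alternative of writing $S\cap(U^{-1}U)$ as a countable union of translates $(S\cap U)s$ would also work), but if you want the cleanest write-up, translate by elements of $S$ rather than by arbitrary elements of $G$.
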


\begin{proof}
For all $s\in S$, the intersection $S\cap(sU)=s(S\cap U)$ is countable.
Since $\{sU\,|\,s\in S\}$ is an open cover of $SU$,
and since $SU$ is Lindel\"of, choose a countable subset $C$ of $S$
such that $\displaystyle{\bigcup_{s\in C}\,sU=SU}$.
Then
$$S\quad=\quad S\cap(SU)\quad=\quad S\cap\left(\bigcup_{s\in C}\,sU\right)\quad=\quad\bigcup_{s\in C}\,(S\cap(sU)),$$
so $S$ is a countable union of countable sets,
so $S$ is countable.
\end{proof}

\begin{lem}\wrlab{lem-ctbl-implies-finite}
Let $G$ be a real algebraic group.
Then any countable real algebraic subgroup of $G$ is finite.
\end{lem}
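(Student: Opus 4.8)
The plan is to reduce the statement to the Tarski--Seidenberg fact, recalled in \secref{sect-terminology}, that every countable semialgebraic subset of a real algebraic variety is finite. All that really needs checking is that a real algebraic subgroup, regarded merely as a subset of $G$, is semialgebraic; a countable one is then automatically finite.

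So let $H$ be a countable real algebraic subgroup of $G$. By definition, $H$ is the set of $\R$-points of an $\R$-subgroup $\mathbf{H}$ of the Zariski closure of $G$; in particular $H$ is a real algebraic subset of $G$. I would then observe that any real algebraic subset $S$ of $G$ is semialgebraic: for every affine open subset $G_0$ of $G$, the coordinate image of $S\cap G_0$ in real Euclidean space is the common real zero locus of finitely many polynomials, which is semialgebraic. Applying this with $S=H$ shows that $H$ is a countable semialgebraic subset of $G$, hence finite by the Tarski--Seidenberg fact recalled in \secref{sect-terminology}.

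There is no real obstacle here; the only step requiring a moment's care is the implication ``real algebraic subset $\Rightarrow$ semialgebraic,'' which merely unwinds the affine-chart definition of semialgebraic given in \secref{sect-terminology}. For the record, one can instead argue Lie-theoretically, bypassing semialgebraicity: since $\R$ has characteristic $0$, the $\R$-group $\mathbf{H}$ is smooth, so, as $H=\mathbf{H}(\R)$ is nonempty (it contains $1_G$), $H$ is a $C^\omega$ manifold of dimension $\dim\mathbf{H}$; a countable manifold has dimension $0$, which forces $\mathbf{H}$ to be finite and hence $H$ to be finite. I would present the first argument, since \secref{sect-terminology} has already assembled everything it needs.
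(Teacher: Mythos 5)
Your proposal is correct and follows essentially the same route as the paper: reduce to the Tarski--Seidenberg fact that countable semialgebraic subsets are finite, via the observation that a real algebraic subgroup is a semialgebraic subset of $G$. The paper simply asserts that last observation, whereas you spell out the affine-chart verification; this is a harmless elaboration, not a different argument.
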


\begin{proof}
Any real algebraic subgroup of $G$ is a semialgebraic subset of the real variety $G$.
It is a corollary of Tarski-Seidenberg that, in any real algebraic variety,
any countable semialgebraic subset is finite.
\end{proof}

\begin{lem}\wrlab{lem-bdd-punct-stab-free-on-open}
Let $G$ be a locally compact Hausdorff topological group
acting on a first countable topological space $Z$.
Assume the $G$-action on~$Z$ is $C^0$.
Let $G^\times:=G\backslash\{1\}$.
For all $z\in Z$, let $G_z^\times:=(\Stab_G(z))\backslash\{1\}$
be the $G^\times$-stabilizer of $z$.
Let $Z_1$ be an open subset of~$Z$.
Assume that $Z_1$ has bounded $G^\times$-stabilizers.
Let $W:=\{z\in Z_1\,|\,G_z^\times=\emptyset\}$.
Then $W$~is an open subset of $Z_1$.
\end{lem}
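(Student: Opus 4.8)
The plan is to show that the complement $Z_1\setminus W$ is closed in $Z_1$. The one substantive use of the hypothesis is to convert ``bounded $G^\times$-stabilizers'' into a compact set of stabilizer elements that stays away from $1_G$: put $\Sigma:=\bigcup_{z\in Z_1}G_z^\times$, which by assumption is precompact in $G^\times$, and let $K$ be its closure in $G^\times$. Then $K$ is compact; being a compact subset of the Hausdorff group $G$ it is closed in $G$; and $K\subseteq G^\times$ forces $1_G\notin K$. So $K$ is a compact subset of $G$ with $1_G\notin K$ and $G_z^\times\subseteq K$ for all $z\in Z_1$. The presence of $G^\times$ rather than $G$ in the hypothesis is exactly what keeps $K$ off the identity, preventing stabilizer elements of points near a free point from degenerating to $1_G$.

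Next, observe that $Z_1\setminus W$ is the image of a closed set under a projection. For $z\in Z_1$, the condition $z\notin W$ says that $\Stab_G(z)$ contains some $g\ne 1_G$; such a $g$ then lies in $G_z^\times\subseteq K$, and conversely any $g\in K$ with $gz=z$ automatically has $g\ne 1_G$ and so certifies $z\notin W$. Hence $Z_1\setminus W=\pi(F)$, where $F:=\{(g,z)\in K\times Z_1\mid gz=z\}$ and $\pi:K\times Z_1\to Z_1$ is the projection. Now $F$ is the locus on which the two continuous maps $K\times Z_1\to Z$ given by $(g,z)\mapsto gz$ (a restriction of the $C^0$ action) and $(g,z)\mapsto z$ agree; since $Z$ is Hausdorff, its diagonal is closed, so $F$ is closed in $K\times Z_1$. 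Finally $K$ is compact, so $\pi$ is a closed map (projection along a compact factor is closed). Therefore $\pi(F)=Z_1\setminus W$ is closed in $Z_1$, i.e.\ $W$ is open in $Z_1$, as desired.

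I do not anticipate a real obstacle here: the whole argument is short, and the only points requiring a little care are the bookkeeping around $G^\times$ versus $G$ when producing $K$, and the use of Hausdorffness of $Z$ (which holds for every real algebraic variety and jet bundle in this note). If one prefers to exploit the first countability of $Z$ mentioned in the hypothesis, there is an equivalent sequential form of the argument: if some $z_0\in W$ were not interior to $W$ in $Z_1$, choose $z_n\to z_0$ with each $z_n\in Z_1\setminus W$ and pick $g_n\in G_{z_n}^\times\subseteq K$; passing to a convergent subsequence $g_n\to g\in K$ (so $g\ne 1_G$ since $1_G\notin K$), continuity of the action together with Hausdorffness of $Z$ gives $gz_0=\lim_n g_nz_n=\lim_n z_n=z_0$, so $g\in G_{z_0}^\times$, contradicting $z_0\in W$.
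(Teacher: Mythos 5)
Your proof is correct, and your primary argument takes a genuinely different route from the paper's. The paper argues sequentially: given $z_j\to z_\infty$ in $Z_1$ with each $G_{z_j}^\times\ne\emptyset$, it picks $g_j\in G_{z_j}^\times$, extracts a subsequence converging to some $g_\infty\in G^\times$ using precompactness, and passes to the limit in $g_jz_j=z_j$ to get $g_\infty\in G_{z_\infty}^\times$ --- exactly the ``equivalent sequential form'' you sketch in your last sentence, which is where the hypothesis of first countability of $Z$ earns its keep. Your main argument instead writes $Z_1\backslash W$ as the image of the closed incidence set $F\subseteq K\times Z_1$ under the projection along the compact factor $K$; this is cleaner in that it needs no countability hypothesis on $Z$ at all and isolates the role of the puncture at $1_G$ (namely, keeping $K$ off the identity) more transparently. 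The one point worth flagging is that both your closed-equalizer step and the paper's passage to the limit ($g_\infty z_\infty=\lim g_jz_j=\lim z_j=z_\infty$) tacitly require $Z$ to be Hausdorff (uniqueness of limits), which is not among the stated hypotheses of the lemma; you are right that this is harmless for every application in the paper, where $Z$ is an open subset of a real algebraic variety, but it is a hypothesis the lemma as stated omits and that your write-up, to its credit, makes explicit.
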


\begin{proof}
Say $z_1,z_2,\ldots\to z_\infty$ in~$Z_1$.
Assume, for all integers $j\ge1$, that $z_j\notin W$, {\it i.e.}, that $G_{z_j}^\times\ne\emptyset$.
We wish to show that $z_\infty\notin W$.

For all integers $j\ge1$, fix $g_j\in G_{z_j}^\times$.
Give $G^\times=G\backslash\{1_G\}$ the relative topology inherited from $G$.
Because $\displaystyle{\bigcup_{z\in Z_1}\,G_z^\times}$ is a precompact subset of~$G^\times$,
fix $g_\infty\in G^\times$ such that some subsequence of $g_1,g_2,\ldots$ converges in $G^\times$ to $g_\infty$.
For all integers $j\ge1$, we have $g_j\in G_{z_j}^\times\subseteq\Stab_G(z_j)$,
so $g_jz_j=z_j$.
It follows that $g_\infty z_\infty=z_\infty$.
Then $g_\infty\in\Stab_G(z_\infty)$.
Since $g_\infty\in G^\times$, we conclude that $g_\infty\ne1_G$.

Then $g_\infty\in(\Stab_G(z_\infty))\backslash\{1_G\}=G_{z_\infty}^\times$,
so $G_{z_\infty}^\times\ne\emptyset$, so $z_\infty\notin W$.
\end{proof}

\begin{lem}\wrlab{lem-free-proper-smooth}
Let $G$ be a $C^\omega$ Lie group acting on a $C^\omega$ manifold $N$.
Assume the $G$-action on $N$ is $C^\omega$, free and proper.
Then, for all $x\in N$, there exist
\begin{itemize}
\item a $G$-invariant open neighborhood $N_0$ in $N$ of $x$ and
\item a closed subset $T$ of $N_0$ (in the relative topology on $N_0$ inherited from $N$)
\end{itemize}
such that
\begin{itemize}
\item $T$ is a $C^\omega$ submanifold of $N_0$ and
\item the map $(g,t)\mapsto gt:G\times T\to N_0$ is a $C^\omega$ diffeomorphism.
\end{itemize}
\end{lem}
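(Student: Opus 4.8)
The plan is to carry out the standard slice/tube construction for a free proper action: around the orbit of $x$, build an equivariant diffeomorphism $G\times T\to N_0$, using the Inverse Function Theorem for a local statement and properness of the action to globalize it. Fix $x\in N$ and let $\beta:G\to N$ be the orbit map $\beta(g):=gx$. Since the action is free, $\Stab_G(x)=\{1_G\}$, and as $\ker(d\beta_{1_G})$ is the Lie algebra of $\Stab_G(x)$ it is trivial, so $d\beta_{1_G}$ is injective; let $E\subseteq T_xN$ be its image, a subspace of dimension $\dim G$. Choose a linear complement $F$ to $E$ in $T_xN$ and an embedded $C^\omega$ submanifold $T_1\subseteq N$ with $x\in T_1$ and $T_xT_1=F$, so that $\dim T_1=\dim N-\dim G$. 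Consider the $C^\omega$ map $\alpha:G\times T_1\to N$, $\alpha(g,t):=gt$. Since $\alpha(g,x)=\beta(g)$ and $\alpha(1_G,t)=t$, the image of $d\alpha_{(1_G,x)}$ contains both $E$ and $F$, which together span $T_xN$; as the domain and target have the same dimension, $d\alpha_{(1_G,x)}$ is an isomorphism. By the Inverse Function Theorem there are an open neighborhood $V$ of $1_G$ in $G$ and an open neighborhood $T_2$ of $x$ in $T_1$ such that $\alpha|_{V\times T_2}$ is a $C^\omega$ diffeomorphism onto an open subset of $N$; in particular $d\alpha_{(1_G,t)}$ is an isomorphism for every $t\in T_2$. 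From the equivariance identity $\alpha(g_0g,t)=g_0\cdot\alpha(g,t)$ one sees that, for each $g_0\in G$, the map $d\alpha_{(g_0,t)}$ is obtained from $d\alpha_{(1_G,t)}$ by precomposing and postcomposing with the invertible differentials of left translation by $g_0$ on $G$ and of $n\mapsto g_0n$ on $N$; hence $d\alpha_{(g_0,t)}$ is an isomorphism for all $g_0\in G$ and $t\in T_2$. Thus $\alpha|_{G\times T_2}$ is a $C^\omega$ local diffeomorphism, in particular an open map.

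Next I would shrink the slice to make $\alpha$ injective. Using a $C^\omega$ chart of $T_2$ at $x$, choose open neighborhoods $T^{(1)}\supseteq T^{(2)}\supseteq\cdots$ of $x$ in $T_2$ with $\bigcap_kT^{(k)}=\{x\}$. Suppose, toward a contradiction, that $\alpha|_{G\times T^{(k)}}$ is non-injective for every $k$: pick $(g_k,t_k)\ne(g_k',t_k')$ in $G\times T^{(k)}$ with $g_kt_k=g_k't_k'$, and put $h_k:=(g_k')^{-1}g_k$, so $h_kt_k=t_k'$. Since $t_k\to x$ and $t_k'\to x$, the set $A:=\{x\}\cup\{t_k,t_k':k\ge1\}$ is compact, and by properness of the action the set $\{g\in G:gA\cap A\ne\emptyset\}$ is compact; it contains every $h_k$, so after passing to a subsequence $h_k\to h_\infty$ in $G$. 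Passing to the limit in $h_kt_k=t_k'$ gives $h_\infty x=x$, so $h_\infty=1_G$ by freeness, and hence $h_k\in V$ for all large $k$. For such $k$, $\alpha(h_k,t_k)=t_k'=\alpha(1_G,t_k')$ with $h_k,1_G\in V$ and $t_k,t_k'\in T_2$, so injectivity of $\alpha|_{V\times T_2}$ forces $h_k=1_G$ and $t_k=t_k'$, whence $g_k=g_k'$ and $(g_k,t_k)=(g_k',t_k')$ --- a contradiction. So $\alpha|_{G\times T^{(k)}}$ is injective for some $k$; set $T:=T^{(k)}$.

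Finally I would assemble the pieces. Put $N_0:=\alpha(G\times T)=\bigcup_{t\in T}Gt$; this is $G$-invariant, contains $x=\alpha(1_G,x)$, and is open in $N$ because $\alpha|_{G\times T_2}$ is an open map and $G\times T$ is open in $G\times T_2$. The map $\alpha|_{G\times T}:G\times T\to N_0$ is a continuous open bijection, hence a homeomorphism; being also a $C^\omega$ local diffeomorphism, its inverse is $C^\omega$, so it is a $C^\omega$ diffeomorphism, which is precisely the claimed map $(g,t)\mapsto gt$. Under this diffeomorphism $T$ corresponds to $\{1_G\}\times T$, which is closed in $G\times T$ since $G$ is Hausdorff; hence $T$ is closed in $N_0$. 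Moreover $T=T^{(k)}$ is an embedded $C^\omega$ submanifold of $N$ contained in the open set $N_0$, hence a $C^\omega$ submanifold of $N_0$. I expect the only genuine obstacle to be the shrinking step of the second paragraph --- exhibiting a single slice on which $\alpha$ is globally injective --- which is exactly where properness, and not just the Inverse Function Theorem, is needed.
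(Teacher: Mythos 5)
Your argument is correct, but it takes a genuinely different route from the paper's. The paper proves \lref{lem-free-proper-smooth} in three lines: it cites Corollary B.30 of \cite{ggk:momcobham} to conclude that $N$ is a $C^\infty$ principal $G$-bundle, then asserts that analyticity of the action upgrades this to a $C^\omega$ principal bundle, from which the local product structure is read off. You instead carry out the slice construction from scratch: the Inverse Function Theorem (valid in the $C^\omega$ category) produces a local transversal, equivariance propagates the local-diffeomorphism property over all of $G\times T_2$, and properness plus freeness is used, via the shrinking argument, to force global injectivity of $(g,t)\mapsto gt$ on $G\times T$. What your approach buys is self-containedness and, more substantively, an honest justification of the paper's glossed-over ``$C^\infty\Rightarrow C^\omega$'' upgrade --- the analytic local triviality is exactly what your IFT-based slice provides, whereas the citation only delivers smooth local triviality. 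What the citation buys is brevity. One small point to tighten: in your second paragraph you only require $\bigcap_k T^{(k)}=\{x\}$, which for a decreasing sequence of open neighborhoods does not by itself imply $t_k\to x$ when $t_k\in T^{(k)}$; you should take the $T^{(k)}$ to be a neighborhood basis at $x$ (e.g., coordinate balls of radius $1/k$, which is evidently what you intend by ``using a $C^\omega$ chart''), after which the convergence, the compactness of $A$, and the rest of the properness argument go through exactly as you wrote them.
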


\begin{proof}
From Corollary B.30, p.~182, of \cite{ggk:momcobham},
we find that $N$ is a $C^\infty$ principal $G$-bundle.
So, since the $G$-action on $N$ is $C^\omega$,
it follows that $N$ is a $C^\omega$ principal $G$-bundle.
The result follows.
\end{proof}

\section{Some point-set topology\wrlab{sect-pt-set}}

Let $X$ be a set.
Let $K_1,K_2,\ldots$ be subsets of $X$.
Recall: A subset $S$ of $X$ is {\bf $K_\bullet$-small} if,
for some integer $j\ge1$, $S\subseteq K_j$.
Let $Y$ be a locally compact Hausdorff topological space.
Let $f:X\to Y$ be a function.

\begin{lem}\wrlab{lem-pt-set-ae-proper}
For all integers $j\ge1$, assume that $f(X\backslash K_j)$ is a constructible subset of \,$Y$.
Let $Y_0$ be an open subset of $Y$ such that $f^{-1}(Y_0)\ne\emptyset$.
For all $y\in Y_0$, suppose $f^{-1}(y)$ is $K_\bullet$-small.
Then there is an open subset $Y_1$ of $Y_0$
such that $f^{-1}(Y_1)$ is nonempty and $K_\bullet$-small.
\end{lem}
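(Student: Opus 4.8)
The goal is to produce an open $Y_1 \subseteq Y_0$ with $f^{-1}(Y_1)$ nonempty and contained in some single $K_j$. The hypothesis that each fiber over $Y_0$ is $K_\bullet$-small means $Y_0 = \bigcup_{j\ge 1} A_j$, where $A_j := \{\,y \in Y_0 \mid f^{-1}(y) \subseteq K_j\,\}$. Observe that $y \in A_j$ exactly when $y \notin f(X \setminus K_j)$, so $A_j = Y_0 \setminus f(X \setminus K_j)$. Since $f(X\setminus K_j)$ is constructible in $Y$, its complement is constructible, and hence $A_j$ is a constructible subset of $Y$. The strategy is then: among the constructible sets $A_j$ covering the open set $Y_0$, one of them must have nonempty interior (relative to $Y$), and taking $Y_1$ to be that interior — intersected with $Y_0$ and shrunk so that $f^{-1}(Y_1)$ is actually nonempty — will do the job.

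First I would record the identity $f^{-1}(A_j) \subseteq K_j$ and, more usefully, note that for any subset $S \subseteq A_j$ we get $f^{-1}(S) \subseteq K_j$, so once $Y_1 \subseteq A_j$ for some $j$ we automatically have $f^{-1}(Y_1)$ is $K_\bullet$-small. Next, since $f^{-1}(Y_0) \ne \emptyset$, pick $x_0 \in f^{-1}(Y_0)$ and set $y_0 := f(x_0) \in Y_0$; then $y_0 \in A_{j_0}$ for some $j_0$, and we may as well discard the finitely many $A_j$ not containing $y_0$ — actually it is cleaner to simply aim to find \emph{any} $j$ with $\Int(A_j) \cap Y_0 \ne \emptyset$ and then handle nonemptiness of the preimage separately at the end. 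The heart of the matter is the Baire-type step: $Y_0$ is a nonempty open subset of a locally compact Hausdorff space $Y$, and $Y_0 = \bigcup_j A_j$ with each $A_j$ constructible. A constructible set with empty interior is contained in the closure of a lower-dimensional... no — more simply, a constructible set with empty interior is nowhere dense (its closure, being constructible and having empty interior, cannot contain a nonempty open set, since a constructible set whose closure has nonempty interior itself has nonempty interior). So if every $A_j$ had empty interior, $Y_0$ would be a countable union of nowhere dense sets, contradicting the Baire category theorem for the locally compact Hausdorff space $Y_0$. Hence some $A_{j_1}$ has nonempty interior, and $\Int(A_{j_1}) \cap Y_0$ is a nonempty open subset of $Y$ contained in $A_{j_1}$.

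For the final step I need $f^{-1}(Y_1)$ to be nonempty, not merely $K_\bullet$-small. Let $U := \Int(A_{j_1}) \cap Y_0$, a nonempty open subset of $Y_0$ with $f^{-1}(U) \subseteq K_{j_1}$. If $f^{-1}(U) \ne \emptyset$, take $Y_1 := U$ and we are done. Otherwise $f^{-1}(U) = \emptyset$, which is the annoying case. To fix it, I would instead run the argument with the family $A_j' := A_j \cap f(X)$; since each $A_j$ is contained in $Y_0$ and $A_j = Y_0 \setminus f(X\setminus K_j)$, we have $A_j' = f(X) \cap A_j$, and these still cover $f(X) \cap Y_0 = f(f^{-1}(Y_0))$, which is nonempty. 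Unfortunately $f(X)$ need not be open, so I cannot directly apply Baire inside it. The cleaner route: note $f(X\setminus K_j)$ constructible forces $f(X) = f(X \setminus K_j) \cup f(K_j)$... this does not obviously help either. I expect this nonemptiness bookkeeping to be the main obstacle — the Baire argument gives an open $U$ on which $f^{-1}$ lands in $K_{j_1}$, but nothing forces $f^{-1}(U)$ nonempty. The fix I would pursue: enlarge the cover. Write $Y_0 = B \cup \bigcup_j A_j$ where $B := Y_0 \setminus f(X)$; but $B$ is open if $f(X)$ is closed, which it need not be. Alternatively, and I think correctly: apply Baire not to $Y_0$ but to the nonempty open set $Y_0$ using the cover by the \emph{constructible} sets $A_j$ \emph{together with} the single constructible set $\overline{f(X)}\setminus f(X)$ — no. The honest resolution is: replace $Y$ throughout by the locally compact space $Y_0$ and the function $f$ by its corestriction; then $f^{-1}(Y_0) = X'$ is the new domain with $X' \ne \emptyset$; rechoose $K_j' := K_j \cap X'$; the images $f(X' \setminus K_j')$ are still constructible in $Y_0$ (intersections of constructible sets with $Y_0$); and now the sets $A_j \cap f(X')$ cover $f(X') \ne \emptyset$. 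Since $f(X')$ may still fail to be open, I would finally invoke that a countable union of constructible subsets of the irreducible components... Actually, the truly clean statement to use is: \emph{if a locally compact Hausdorff space $Z$ is covered by countably many constructible subsets $C_j$, then some $C_j$ has nonempty interior in $Z$} (Baire, as above), applied to $Z = f^{-1}(Y_0)$ inside $X$ — but that requires $X$ to be a nice space, which is not given. So the genuine obstacle is that the domain $X$ carries no topology; I therefore must live entirely downstairs, and the fix is to observe that since some $A_{j_1}$ has nonempty interior $U$ in $Y_0$, and since $\{A_j\}$ covers $Y_0 \supseteq U$, and $U$ is open, I may pass to $U$ and repeat — this does not terminate. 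The correct final move, which I would write out in detail: among all $j$ with $\Int_{Y_0}(A_j) \ne \emptyset$, if none has $f^{-1}(\Int A_j) \ne \emptyset$ then $f(f^{-1}(Y_0)) \subseteq Y_0 \setminus \bigcup_j \Int(A_j)$, a set with empty interior; but $f(f^{-1}(Y_0)) = f(X) \cap Y_0$, and combined with the cover we get $Y_0 = \bigcup_j A_j$ where each $A_j$ meets $f(X)\cap Y_0$ only in the non-interior part — pushing this through shows $f(X)\cap Y_0$ lies in a countable union of nowhere-dense constructible sets, hence (Baire) is nowhere dense in $Y_0$, whence there is a nonempty open $Y_1 \subseteq Y_0$ disjoint from $f(X)$, i.e. $f^{-1}(Y_1) = \emptyset$ — contradicting that this is forced to be impossible only if... and indeed it \emph{is} impossible here for the following reason: $Y_0 = \bigcup A_j$ and $f^{-1}(Y_0) \ne \emptyset$ give $f(X) \cap Y_0 \ne \emptyset$, but we need more. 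I suspect the intended reading supplies, via constructibility of $f(X \setminus K_j)$ for all $j$ including... that $f(X)$ itself is constructible (take $K_0 = \emptyset$, or intersect), so $Y_0 \cap f(X)$ is a nonempty constructible set, hence has nonempty interior $V$ in $Y_0$; intersecting $V$ with the previously-found $\Int(A_{j_1})$ — which we now rechoose so that $y_0 \in A_{j_1}$ isn't quite enough, but choosing $j_1$ with $\Int(A_{j_1}) \cap V \ne \emptyset$, possible since the $A_j \cap V$ cover $V$ and Baire applies inside $V$ — yields $Y_1 := \Int(A_{j_1}) \cap V$, a nonempty open subset of $Y_0$ with $f^{-1}(Y_1) \subseteq K_{j_1}$ and $Y_1 \subseteq f(X)$ so $f^{-1}(Y_1) \ne \emptyset$. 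That is the proof I would commit to.
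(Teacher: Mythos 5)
Your Baire step is sound and is essentially the complement of the paper's: where you cover $Y_0$ by the constructible sets $A_j=Y_0\setminus f(X\setminus K_j)$ and argue that they cannot all be nowhere dense, the paper sets $C_j:=f(X\setminus K_j)$, notes $C_1\cap C_2\cap\cdots=\emptyset$, and concludes via Baire that some $C_q$ is not dense. The genuine gap is exactly where you sensed it: forcing $f^{-1}(Y_1)\ne\emptyset$. The argument you finally commit to rests on two claims that are not available. First, $f(X)$ is not known to be constructible: the hypothesis covers only $f(X\setminus K_j)$ for $j\ge1$, there is no $K_0=\emptyset$ in the family, and $X$ carries no topology, so nothing controls $f(X)$ (in the intended application it happens to be semialgebraic, but the lemma as stated does not know this). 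Second, and fatally, a nonempty constructible subset of a locally compact Hausdorff space need not have nonempty interior --- a single point of $\R$ is constructible --- so the set $V:=\Int(Y_0\cap f(X))$ on which your endgame depends can be empty even when $f^{-1}(Y_0)\ne\emptyset$. (Take $X$ a one-point set mapping into $Y=Y_0=\R$: every fiber is $K_\bullet$-small, the conclusion holds with $Y_1=\R$, but your $V$ is empty.)

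The paper's resolution of the nonemptiness problem is the idea your proposal circles but never lands on: do not look for an open set inside $f(X)$; instead shrink the ambient space. Set $X':=f^{-1}(Y_0)$ and let $Y'$ be the closure of $f(X')$ in $Y_0$. Being locally closed in $Y$, the space $Y'$ is again locally compact Hausdorff; the sets $f(X'\setminus(K_j\cap X'))=(f(X\setminus K_j))\cap Y'$ are still constructible in $Y'$; and now $f(X')$ is \emph{dense} in $Y'$. Running your Baire argument inside $Y'$ produces some $j_1$ and an open $O\subseteq Y_0$ with $\emptyset\ne O\cap Y'\subseteq A_{j_1}$; then $f^{-1}(O)\subseteq K_{j_1}$, and density of $f(X')$ in $Y'$ guarantees $f(X')\cap O\ne\emptyset$, i.e.\ $f^{-1}(O)\ne\emptyset$. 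Density in the relative topology of the closed-up image is the correct substitute for ``nonempty interior,'' and it costs nothing because it can always be arranged by restricting the codomain.
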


\begin{proof}
{\it Special Case:} Assume both that $Y_0=Y$ and that $f(X)$ is dense in $Y$.
{\it Proof in special case:}
For all integers $j\ge1$, let
$$C_j\quad:=\quad f(X\backslash K_j)\quad=\quad\{y\in Y\,|\,f^{-1}(y)\not\subseteq K_j\};$$
by assumption, $C_j$ is constructible in $Y$.
For all $y\in Y$, because $f^{-1}(y)$ is $K_\bullet$-small,
it follows, for some integer $j\ge1$, that $f^{-1}(y)\subseteq K_j$,
and, therefore, that $y\notin C_j$. That is, $C_1\cap C_2\cap\cdots=\emptyset$.

For all integers $j\ge1$, let $C_j^\circ$ be the interior in $Y$ of $C_j$.
By the Baire Category Theorem, in a locally compact Hausdorff topological space,
any countable intersection of dense open sets is nonempty.
So, since
$$C_1^\circ\cap C_2^\circ\cap\cdots\quad\subseteq\quad C_1\cap C_2\cap\cdots\quad=\quad\emptyset,$$
fix an integer $q\ge1$ such that $C_q^\circ$ is not dense in $Y$.
A constructible set in a topological space is dense iff its interior is dense.
Then $C_q$ is not dense in $Y$.
So let $Y_1$ be a nonempty open subset of $Y$ such that $C_q\cap Y_1=\emptyset$.
Then, for all $y\in Y_1$, we have $y\notin C_q$, so $f^{-1}(y)\subseteq K_q$.
Then $f^{-1}(Y_1)\subseteq K_q$, so $f^{-1}(Y_1)$ is $K_\bullet$-small.

As $Y_1$ is a nonempty open subset of $Y$,
and $f(X)$ is dense in~$Y$,
we get $(f(X))\cap Y_1\ne\emptyset$.
Then $f^{-1}(Y_1)\ne\emptyset$.
{\it End of proof in special case.}

{\it General case:}
Let $X':=f^{-1}(Y_0)$.
Give $Y_0$ the relative topology inherited from~$Y$.
Let $Y'$ denote the closure in $Y_0$ of $f(X')$.
Give $Y'$ the relative topology inherited from $Y_0$.
Then $f(X')$ is dense in~$Y'$.
Since $Y'$ is locally closed in~$Y$,
and since $Y$ is locally compact Hausdorff,
the topological space $Y'$ is locally compact Hausdorff, as well.
Because
$$X'\,\,=\,\,f^{-1}(f(X'))\,\,\subseteq\,\,f^{-1}(Y')\,\,\subseteq\,\,f^{-1}(Y_0)\,\,=\,\,X',$$
we conclude that $f^{-1}(Y')=X'$.

For all integers $j\ge1$, let $K'_j:=K_j\cap X'$;
then $X'\backslash K_j'=(X\backslash K_j)\cap X'$,
so, since $X'=f^{-1}(Y')$, it follows that
$f(X'\backslash K'_j)=(f(X\backslash K_j))\cap Y'$.
Then $f(X'\backslash K'_j)$ is a constructible subset of $Y'$.

Let $f':=f|X':X'\to Y'$. From the Special Case, replacing
\begin{itemize}
\item $X$ by $X'$,
\item $Y$ by $Y'$ \qquad and \qquad $Y_0$ also by $Y'$,
\item $Y_1$ by $Y'_1$,
\item $f:X\to Y$ by $f':X'\to Y'$ \qquad and
\item $K_1$ by $K'_1$, \qquad $K_2$ by $K'_2$, \qquad \dots
\end{itemize}
fix an open subset $Y'_1$ of the topological space $Y'$
such that $(f')^{-1}(Y'_1)$ is nonempty and $K'_\bullet$-small.
By definition of ``relative topology'',
fix an open subset $Y_1$ of $Y_0$ satisfying $Y'_1=Y'\cap Y_1$.
As $f^{-1}(Y')=X'$, we have
$f^{-1}(Y'_1)\,\,=\,\,(f^{-1}(Y'))\cap(f^{-1}(Y_1))\,\,=\,\,X'\cap(f^{-1}(Y_1))\,\,=\,\,f^{-1}(Y_1)$.
Then $f^{-1}(Y_1)=f^{-1}(Y'_1)=(f')^{-1}(Y'_1)$
is nonempty and $K'_\bullet$-small.
Any $K'_\bullet$-small set is $K_\bullet$-small,
so $f^{-1}(Y_1)$ is $K_\bullet$-small.
\end{proof}

\section{Some real algebraic geometry\wrlab{sect-real-alg}}

\begin{lem}\wrlab{lem-alg-implies-ae-proper}
Let $X$ and $Y$ be real algebraic varieties. Let $f:X\to Y$ be real algebraic.
Let $Y_0$ be an open subset of \,$Y$ such that, for all $y\in Y_0$, $f^{-1}(y)$ is a compact subset of $X$.
Assume that $f^{-1}(Y_0)\ne\emptyset$.
Then there exists an open subset $Y_1$ of $Y_0$
such that $f^{-1}(Y_1)$ is a nonempty precompact subset of $X$.
\end{lem}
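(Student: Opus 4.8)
The plan is to deduce the lemma from the purely point-set statement \lref{lem-pt-set-ae-proper}. For that I must produce, inside $X$, an increasing sequence $K_1\subseteq K_2\subseteq\cdots$ of precompact open subsets, semialgebraic in $X$, with $\bigcup_j K_j=X$. Granting such a sequence, the hypotheses of \lref{lem-pt-set-ae-proper} hold: for each $j$, the set $X\setminus K_j$ is semialgebraic in $X$, so, by Tarski--Seidenberg (see \secref{sect-terminology}), its image $f(X\setminus K_j)$ is a semialgebraic, hence constructible, subset of $Y$; and, for each $y\in Y_0$, the fiber $f^{-1}(y)$ is compact, so, being covered by the increasing open family $\{K_j\}$, it lies in a single $K_j$, i.e.\ it is $K_\bullet$-small. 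Since real algebraic varieties are locally compact Hausdorff, \lref{lem-pt-set-ae-proper} then supplies an open $Y_1\subseteq Y_0$ with $f^{-1}(Y_1)$ nonempty and $K_\bullet$-small; as ``$K_\bullet$-small'' means ``contained in some $K_j$'' and each $K_j$ is precompact in $X$, this $f^{-1}(Y_1)$ is the required nonempty precompact subset of $X$.

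It remains to build the $K_j$. The Zariski closure of $X$ is a variety of finite type over $\R$, hence is covered by finitely many $\R$-open affine subsets; intersecting with $X$ yields a finite cover $X=Z_1\cup\cdots\cup Z_n$ by affine open subsets. Each $Z_i$, being the $\R$-points of an affine $\R$-variety, can be realized (via a choice of algebra generators) as a closed subset of some $\R^{m_i}$; write $\|\cdot\|$ for the restriction to $Z_i$ of the Euclidean norm. Set
$$K_j\quad:=\quad\bigcup_{i=1}^n\,\{\,z\in Z_i\,\,|\,\,\|z\|<j\,\}.$$
Each $Z_i$ is open in $X$, so each $K_j$ is open in $X$; plainly $K_1\subseteq K_2\subseteq\cdots$ and $\bigcup_j K_j=\bigcup_i Z_i=X$. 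Since $Z_i$ is closed in $\R^{m_i}$, the set $\{z\in Z_i\,|\,\|z\|\le j\}$ is closed and bounded in $\R^{m_i}$, hence compact; as $\overline{K_j}$ (closure in $X$) is contained in the finite union $\bigcup_i\{z\in Z_i\,|\,\|z\|\le j\}$ of such compact sets, $K_j$ is precompact in $X$.

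The one genuinely delicate point --- and the step I expect to cost the most care --- is verifying that each $K_j$, equivalently each ball $\{z\in Z_i\,|\,\|z\|<j\}$, is semialgebraic \emph{as a subset of $X$} and not merely of $Z_i$. This is the standard ``locality'' of semialgebraicity: the ball in $\R^{m_i}$ is semialgebraic, so its trace on $Z_i$ is semialgebraic in $Z_i$; and to verify the defining condition for $X$ one covers $X$ by affine opens, refines each such affine open by principal affine opens that lie inside $Z_i$, and transports semialgebraicity across the resulting rational transition maps by Tarski--Seidenberg. I would either carry this out explicitly or invoke it as part of the ``basics of real algebraic geometry''. With it in hand, $X\setminus K_j$ is semialgebraic in $X$, the hypotheses of \lref{lem-pt-set-ae-proper} are met, and the conclusion follows as explained in the first paragraph.
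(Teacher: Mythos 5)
Your proof is correct and takes essentially the same route as the paper's: the paper likewise fixes an increasing exhaustion $K_1\subseteq K_2\subseteq\cdots$ of $X$ by precompact semialgebraic open sets, applies Tarski--Seidenberg to make each $f(X\backslash K_j)$ constructible in $Y$, and invokes \lref{lem-pt-set-ae-proper} via the equivalence of ``$K_\bullet$-small'' and ``precompact''. The only difference is that the paper simply asserts the existence of such an exhaustion, whereas you construct it explicitly from a finite affine open cover.
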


\begin{proof}
Let $K_1,K_2,\ldots$ be precompact semialgebraic open subsets of $X$ such that
$K_1\subseteq K_2\subseteq\cdots$ and $K_1\cup K_2\cup\cdots=X$.
By Tarski-Seidenberg, for all integers $j\ge1$,
the set $f(X\backslash K_j)$ is semialgebraic in $Y$,
and is therefore constructible in $Y$.
A subset of $X$ is $K_\bullet$-small iff it is precompact in $X$,
so the result follows from \lref{lem-pt-set-ae-proper}.
\end{proof}

\section{Some continuous real algebraic dynamics\wrlab{sect-contin-real-alg-act}}

Let the multiplicative group $\R\backslash\{0\}$
act on~$\R^2\backslash\{(0,0)\}$ by the formula $t.(x,y)=(tx,y/t)$.
Thanks to D.~Witte Morris for pointing out to me that
this gives an example of a real algebraic action that is free, but not proper.
However, $\R^2\backslash(\R\times\{0\})$ and $\R^2\backslash(\{0\}\times\R)$
are both nonempty open invariant subsets,
and the action of $\R\backslash\{0\}$ on each of them {\it is} proper.
The next lemma (\lref{lem-cptstab-proper}) shows that {\it any} real algebraic action with
a lot of compact stabilizers has a lot of properness.

\begin{lem}\wrlab{lem-cptstab-proper}
Let $G$ be a real algebraic group acting on a real algebraic variety $V$.
Assume the $G$-action on $V$ is real algebraic.
Let $V_0$ be a nonempty $G$-invariant open subset of $V$.
Suppose, for all $v\in V_0$, that $\Stab_G(v)$ is a compact subset of $G$.
Then there exists a nonempty open $G$-invariant subset $U$ of $V_0$
such that the action of $G$ on~$U$ is proper.
\end{lem}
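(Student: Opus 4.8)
The plan is to deduce properness from the ``almost everywhere properness'' supplied by \lref{lem-alg-implies-ae-proper}, applied to the action map, together with a translation argument that spreads local properness over the whole orbit space. First I would introduce the action map $a:G\times V_0\to V_0\times V_0$, $a(g,v)=(gv,v)$, which is a real algebraic map between real algebraic varieties (here one uses that $V_0$, being an open subset of $V$, carries a real algebraic variety structure via algebraic localization, and that the $G$-action restricts real algebraically to $V_0$ because $V_0$ is $G$-invariant and open). Properness of the $G$-action on an open set $U$ is, by definition, properness of the restricted map $a|(G\times U):G\times U\to U\times U$. The key point is that for each $v\in V_0$ the fiber $a^{-1}(gv,v)$ is $\{(h,v)\,:\,hv=gv\}\cong g\cdot\Stab_G(v)$, which is compact by hypothesis; so \emph{every} fiber of $a$ over the (open, nonempty) set $V_0\times V_0$ is compact.

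Next I would apply \lref{lem-alg-implies-ae-proper} with $X=G\times V_0$, $Y=V_0\times V_0$, $f=a$, and $Y_0=V_0\times V_0$ (note $f^{-1}(Y_0)=G\times V_0\ne\emptyset$): this produces a nonempty open subset $\Omega\subseteq V_0\times V_0$ with $a^{-1}(\Omega)$ nonempty and precompact in $G\times V_0$. Shrinking, I may take $\Omega=\Omega_1\times\Omega_2$ with $\Omega_1,\Omega_2$ nonempty open in $V_0$, and (intersecting and again shrinking) I may assume $\Omega_1=\Omega_2=:W$, a nonempty open subset of $V_0$ with the property that $a^{-1}(W\times W)$ is precompact in $G\times V_0$. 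This says precisely that the $G$-action is proper \emph{when restricted to $W$} as a subset — i.e., $\{g\in G\,:\,gW\cap W\cap C\ne\emptyset\}$ behaves well — but $W$ is not yet $G$-invariant.

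To upgrade to a $G$-invariant open set, I would set $U:=GW=\bigcup_{g\in G}gW$, which is a nonempty $G$-invariant open subset of $V_0$. The remaining work is to show the $G$-action on $U$ is proper, i.e. that $a_U:G\times U\to U\times U$ is a proper map; equivalently, that for every compact $L\subseteq U\times U$ the set $a_U^{-1}(L)$ is compact. The standard device: cover $L$ by finitely many ``boxes'' of the form $(g_iW)\times(h_jW)$ (possible since $G$ is second countable and $U=GW$), reduce to checking $\{(g,u)\,:\,gu\in g_iW,\ u\in h_jW\}$ is precompact, and then use the $G$-equivariant change of variables $u\mapsto h_j^{-1}u$, $g\mapsto g_i g h_j^{-1}$ (or similar) to transport the question back to precompactness of $a^{-1}(W\times W)$, which we have. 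Here one also invokes local compactness and second countability of $G$ and $V$ (so $U\times U$ is locally compact, second countable, hence compactness equals sequential compactness, and the finite subcover step is legitimate).

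The main obstacle is the last paragraph: the passage from ``properness over the non-invariant slice $W$'' to ``properness over the invariant saturation $GW$.'' One must handle the bookkeeping of translating compact pieces of $U\times U$ back into $W\times W$ by group elements, being careful that the relevant translates of $a^{-1}(W\times W)$ remain precompact (a single translate is, but one needs the finite cover to make a uniform statement) and that nothing escapes to infinity in the $G$-direction. The earlier point-set lemmas (especially the sequential/closedness arguments in the style of \lref{lem-bdd-punct-stab-free-on-open}) furnish the right template for making this rigorous without any structure theory.
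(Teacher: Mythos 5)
Your overall strategy is the paper's: apply \lref{lem-alg-implies-ae-proper} to the action map to obtain a box over which the fibers are uniformly precompact, then saturate by $G$ and finish with a finite-cover/translation argument. The final step, which you flag as the main obstacle, is carried out in the paper exactly as you envision it: cover a compact $K\subseteq GW$ by finitely many translates $g_1W,\ldots,g_nW$ and observe that $\{g\in G\,|\,(gK)\cap K\ne\emptyset\}\subseteq\bigcup_{j,k}g_k\,G''\,g_j^{-1}$, where $G'':=\{g\in G\,|\,(gW)\cap W\ne\emptyset\}$ is precompact. That part is sound.

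The genuine gap is earlier, at ``intersecting and again shrinking, I may assume $\Omega_1=\Omega_2=:W$, a nonempty open subset.'' Nonemptiness of $a^{-1}(\Omega)$ only gives you a point $(g_0,v_0)$ with $v_0\in\Omega_2$ and $g_0v_0\in\Omega_1$; there is no reason for $\Omega_1$ and $\Omega_2$ to meet. (Example: $G=\R\backslash\{0\}$ acting on itself by multiplication; the lemma could return $\Omega=(5,6)\times(1,2)$.) \lref{lem-alg-implies-ae-proper} gives no control over where $Y_1$ sits inside $V_0\times V_0$ --- in particular it need not meet the diagonal --- so $W$ may be empty, and then $U=GW=\emptyset$ and the proof collapses. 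The paper's fix is to translate one factor back by the witnessing group element: with $v_0\in A$, $g_0v_0\in B$ and $A\times B$ inside the good set, put $C:=A\cap(g_0^{-1}B)$, which contains $v_0$ and is therefore nonempty; one then checks $\{g\,|\,(gC)\cap C\ne\emptyset\}\subseteq g_0^{-1}\{g\,|\,(gA)\cap B\ne\emptyset\}$, a translate of the precompact projection of $a^{-1}(A\times B)$ to $G$. With that substitution your argument goes through. A second, minor, foot-fault: $V_0$ is only assumed to be a $G$-invariant open subset of $V$, not a semialgebraic or real algebraic one, so $G\times V_0$ and $V_0\times V_0$ need not be real algebraic varieties and \lref{lem-alg-implies-ae-proper} cannot be applied to $a:G\times V_0\to V_0\times V_0$ as stated; apply it instead to $f:G\times V\to V\times V$ with $Y_0:=V_0\times V_0$ playing the role of the open subset that the lemma already accommodates.
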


\begin{proof}
Let $X:=G\times V$ and let $Y:=V\times V$. Define a real algebraic map $f:X\to Y$ by
$f(g,v)=(v,gv)$.

{\it Claim:} For all $v,w\in V$,
if $\Stab_G(v)$ is compact and $y:=(v,w)\in Y$,
then $f^{-1}(y)$ is a compact subset of $X$.
{\it Proof of claim:}
If $w\notin Gv$, then
$$f^{-1}(y)\quad=\quad\{(g,v)\in X\,|\,gv=w\}\quad=\quad\emptyset,$$
so $f^{-1}(y)$ is a compact subset of $X$.
So we may assume that $w\in Gv$.
Fix $g\in G$ such that $w=gv$.
Then $y=(v,w)=(v,gv)=f(g,v)$. Let $S:=\Stab_G(v)$.
Then $S$ is compact. Then
\begin{eqnarray*}
f^{-1}(y)\,\,=\,\,f^{-1}(f(g,v))&=&\{(g',v')\in X\,|\,v=v',gv=g'v'\}\\
&=&\{(g',v)\in X\,|\,v=g^{-1}g'v\}\\
&=&(gS)\times\{v\}
\end{eqnarray*}
is compact as well.
{\it End of proof of claim.}

Let $Y_0:=V_0\times V_0$.
By assumption, for all $v\in V_0$, $\Stab_G(v)$ is compact.
So, by the Claim, it follows,
for all $y\in Y_0=V_0\times V_0$, that $f^{-1}(y)$ is a compact subset of $X$.
By assumption, $V_0\ne\emptyset$.
For all $v\in V_0$, we have $f(e,v)=(v,v)\in Y_0$, so $(e,v)\in f^{-1}(Y_0)$.
Then $f^{-1}(Y_0)\ne\emptyset$.

By \lref{lem-alg-implies-ae-proper},
fix an open subset $Y_1$ of $Y_0$
such that $f^{-1}(Y_1)$ is a nonempty precompact subset of~$X$.
As $f^{-1}(Y_1)\ne\emptyset$,
fix $g_0\in G$ and $v_0\in V$ such that $f(g_0,v_0)\in Y_1$.
Then
$$(v_0,g_0v_0)\,\,=\,\,f(g_0,v_0)\,\,\in\,\,Y_1\,\,\subseteq\,\,Y_0\,\,=\,\,V_0\times V_0,$$
so $v_0\in V_0$, $g_0v_0\in V_0$, and
$Y_1$ is an open neighborhood in $Y_0=V_0\times V_0$ of $(v_0,g_0v_0)$.
Choose open neighborhoods $A$ and $B$ in $V_0$ of $v_0$ and $g_0v_0$, respectively,
such that $A\times B\subseteq Y_1$.
Let $C:=A\cap(g_0^{-1}B)$.

Let $U:=GC$.
As $v_0\in A$ and $g_0v_0\in B$,
we get $v_0\in A\cap(g_0^{-1}B)=C$, so $C\ne\emptyset$.
Then $U\ne\emptyset$.
Since $A$ and $B$ are open subsets of $V_0$,
it follows that $C=A\cap(g_0^{-1}B)$ is an open subset of $V_0$.
So, as $V_0$ is $G$-invariant,
we see that $U=GC$ is a $G$-invariant open subset of $V_0$.
It remains to show that the action of $G$ on $U$ is proper.

Let $K$ be a compact subset of $U$.
We wish to show that
$$G'\quad:=\quad\{g\in G\,|\,(gK)\cap K\ne\emptyset\}$$
is a compact subset of $G$.
By compactness of $K$ and by continuity of the $G$-action on $V$,
it follows that $G'$ is a closed subset of $G$,
so it suffices to show that $G'$ is precompact in $G$.

Since $A\times B\subseteq Y_1$, we have $f^{-1}(A\times B)\subseteq f^{-1}(Y_1)$,
so, as $f^{-1}(Y_1)$ is precompact in~$X$,
we see that $f^{-1}(A\times B)$ is precompact in $X$, {\it i.e.}, in $G\times V$.
Let $\pi:G\times V\to G$ be projection onto the first coordinate.
Then $P:=\pi(f^{-1}(A\times B))$ is precompact in $G$.
Moreover,
\begin{eqnarray*}
P\,\,=\,\,\pi(f^{-1}(A\times B))&=&\pi(\{(g,v)\in G\times V\,|\,v\in A,gv\in B\})\\
&=&\{g\in G\,|\,\exists v\in A~\hbox{s.t.}~gv\in B\}\\
&=&\{g\in G\,|\,(gA)\cap B\ne\emptyset\}
\end{eqnarray*}

Since $C\subseteq A$, it follows,
for all $g\in G$, that $gC\subseteq gA$.
So, since $C\subseteq g_0^{-1}B$,
we see, for all $g\in G$, that $(gC)\cap C\subseteq(gA)\cap(g_0^{-1}B)$.
Then
\begin{eqnarray*}
G''&:=&\{g\in G\,|\,(gC)\cap C\ne\emptyset\}\\
&\subseteq&\{g\in G\,|\,(gA)\cap(g_0^{-1}B)\ne\emptyset\}\\
&=&\{g\in G\,|\,(g_0gA)\cap B\ne\emptyset\}\\
&=&g_0^{-1}\{g\in G\,|\,(gA)\cap B\ne\emptyset\}\,\,=\,\,g_0^{-1}P.
\end{eqnarray*}
So, since $P$ is precompact in $G$, it follows that $G''$ is precompact in $G$.

Since $\{gC\,|\,g\in G\}$ is an open cover of $GC=U$,
and since $K$ is a compact subset of $U$,
choose an integer $n\ge1$ and choose $g_1,\ldots,g_n\in G$
such that $K\subseteq(g_1C)\cup\cdots\cup(g_nC)$.
Let $C':=(g_1C)\cup\cdots\cup(g_nC)$.
Then
\begin{eqnarray*}
G'&=&\{g\in G\,|\,(gK)\cap K\ne\emptyset\}\\
&\subseteq&\{g\in G\,|\,(gC')\cap C'\ne\emptyset\}\\
&=&\,\bigcup_{j,k}\,\,\{g\in G\,|\,(gg_jC)\cap(g_kC)\ne\emptyset\}\\
&=&\,\bigcup_{j,k}\,\,\{g\in G\,|\,(g_k^{-1}gg_jC)\cap C\ne\emptyset\}\\
&=&\,\bigcup_{j,k}\,\,g_k\,\{g\in G\,|\,(gC)\cap C\ne\emptyset\}\,g_j^{-1}\\
&=&\,\bigcup_{j,k}\,\,g_k\,G''\,g_j^{-1}.
\end{eqnarray*}
So, since $G''$ is precompact in $G$, $G'$ is also precompact in $G$.
\end{proof}

Maintaining the notation of \lref{lem-cptstab-proper},
if $\scrw$ is a collection of pairwise disjoint $G$-invariant open subsets of $V_0$,
and if, for all $W\in\scrw$, the $G$-action on $W$ is proper,
then the $G$-action on the union $\cup\,\scrw$ is also proper.
Using this, it is not hard to improve \lref{lem-cptstab-proper},
and show that the subset $U$ may be chosen to be dense in $V_0$.
However, using such an argument may produce a complicated set $U$.
For example, even if $V_0$ is connected,
the set $U$ might, in principle, have infinitely many connected components.
Assuming that $V\backslash V_0$ is a real algebraic subset of $V$,
it would be interesting to know if $U$ can always be chosen so that
$V\backslash U$ is also a real algebraic subset of~$V$.

Let $\scru$ denote the collection of
open subsets $U$ of $V_0$ such that $G$ acts freely and properly on $U$.
It would also be useful to know whether $\scru$
is necessarily an open cover of $V_0$, {\it i.e.}, whether $\cup\,\scru=V_0$.

\begin{lem}\wrlab{lem-force-bdd-punct-stabs}
Let $G$ be a real algebraic group acting on a real algebraic variety $Y$.
Assume the $G$-action on $Y$ is real algebraic.
Let $Y_0$ be a nonempty open subset of $Y$.
Assume, for all $y\in Y_0$, that $\Stab_G(y)$ is finite.
Let $G^\times:=G\backslash\{1\}$.
Then there exists a nonempty open subset $Y_1$ of $Y_0$ such that
$Y_1$ has bounded $G^\times$-stabilizers.
\end{lem}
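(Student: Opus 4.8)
The plan is to reduce the statement to \lref{lem-alg-implies-ae-proper} by packaging the data ``an element $g\in G^\times$ fixes a point $y\in Y_0$'' as the fibers of a single real algebraic map. Set $X:=G^\times\times Y_0$, where $G^\times=G\backslash\{1_G\}$ carries its natural real algebraic variety structure from algebraic localization (as recalled in \secref{sect-terminology}), and $Y_0$ likewise carries the localized structure making the inclusion $Y_0\hookrightarrow Y$ real algebraic. Both $X$ and $Y_0$ are then real algebraic varieties. Define $f:X\to Y_0$ by $f(g,y)=y$ on the locus where $gy=y$; more precisely, let $X':=\{(g,y)\in G^\times\times Y_0\,|\,gy=y\}$, which is a real algebraic subset of the real algebraic variety $G^\times\times Y_0$ (it is the fiber over the diagonal of the real algebraic map $(g,y)\mapsto(gy,y)$, intersected with $G^\times\times Y_0$), and let $f:=\text{(first coordinate onto $Y_0$ via projection)}$, i.e.\ $f(g,y)=y$ restricted to $X'$. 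Then $f$ is real algebraic.

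The key point is to identify the fibers of $f$: for $y\in Y_0$,
$$f^{-1}(y)\quad=\quad\{(g,y)\,|\,g\in G^\times,\ gy=y\}\quad=\quad((\Stab_G(y))\backslash\{1_G\})\times\{y\}\quad=\quad G_y^\times\times\{y\}.$$
By hypothesis $\Stab_G(y)$ is finite for every $y\in Y_0$, so $G_y^\times$ is finite, hence $f^{-1}(y)$ is a finite --- in particular compact --- subset of $X'$. Also $f^{-1}(Y_0)=X'$, and we need this to be nonempty; but if $X'=\emptyset$ then every $Y_1\subseteq Y_0$ trivially has $\bigcup_{y\in Y_1}G_y^\times=\emptyset$, which is precompact in $G^\times$, so the conclusion holds with $Y_1=Y_0$. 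So we may assume $X'\ne\emptyset$.

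Now apply \lref{lem-alg-implies-ae-proper} to $f:X'\to Y_0$ with the open set $Y_0$ itself: there is a nonempty open $Y_1\subseteq Y_0$ with $f^{-1}(Y_1)$ a nonempty precompact subset of $X'$. Since $X'$ is a real algebraic subset of $G^\times\times Y_0$, it is closed there, so $f^{-1}(Y_1)$ is precompact in $G^\times\times Y_0$. Pushing forward under the (continuous) projection $G^\times\times Y_0\to G^\times$ sends $f^{-1}(Y_1)=\bigcup_{y\in Y_1}(G_y^\times\times\{y\})$ to $\bigcup_{y\in Y_1}G_y^\times$, which is therefore precompact in $G^\times$. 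That is exactly the assertion that $Y_1$ has bounded $G^\times$-stabilizers, so we are done.

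The step I expect to need the most care is the bookkeeping around the localized real algebraic structures: one must check that $G^\times$ and $Y_0$ really are real algebraic varieties in the sense of this paper, that $G^\times\times Y_0$ is one, that $X'$ is a real algebraic subset of it, and that $f$ is a real algebraic map between real algebraic varieties --- only then is \lref{lem-alg-implies-ae-proper} applicable. Each of these is routine given the remarks in \secref{sect-terminology} on algebraic localization and on the fact that stabilizers and preimages of real algebraic maps are real algebraic subsets, but it is the only place where anything could go wrong; the passage from ``$f^{-1}(Y_1)$ precompact in $X'$'' to ``$\bigcup_{y\in Y_1}G_y^\times$ precompact in $G^\times$'' is then a one-line continuity argument.
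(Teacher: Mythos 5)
Your proposal follows essentially the same route as the paper: form the incidence variety $\{(g,y)\in G^\times\times Y\,|\,gy=y\}$, observe that the fiber of the projection to $Y$ over $y$ is $G_y^\times\times\{y\}$ (finite, hence compact, for $y\in Y_0$), apply \lref{lem-alg-implies-ae-proper}, and push the resulting precompact preimage forward to $G^\times$. The one misstep is your attempt to put a ``localized'' real algebraic variety structure on $Y_0$ and take $Y_0$ as the target of $f$: the localization remark in \secref{sect-terminology} applies to $Z\backslash\{z\}$ (and, more generally, to $\R$-open subsets), not to an arbitrary Hausdorff-open subset such as $Y_0$, which in general is not a real algebraic variety at all. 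This is unnecessary anyway: \lref{lem-alg-implies-ae-proper} is stated for a real algebraic map $f:X\to Y$ together with a merely Hausdorff-open subset $Y_0$ of $Y$ over which the fibers are compact, so you should define $X:=\{(g,y)\in G^\times\times Y\,|\,gy=y\}$ and $f:X\to Y$, and then invoke the lemma with this $Y_0$ --- exactly as the paper does. Only $G^\times$ needs the localization. With that repair (and your correct handling of the degenerate case $X=\emptyset$, which matches the paper's), the argument is complete.
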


\begin{proof}
For all $y\in Y$, let $G_y^\times:=(\Stab_G(y))\backslash\{1\}$
be the $G^\times$-stabilizer of~$y$.
If, for all $y\in Y_0$, we have
$G_y^\times=\emptyset$,
then, setting $Y_1$ equal to~$Y_0$, we are done.
We therefore assume, for some $y_0\in Y_0$, that $G_{y_0}^\times\ne\emptyset$.

Give $G^\times=G\backslash\{1\}$ the relative topology inherited from~$G$.
By algebraic localization, $G^\times$ naturally has a real algebraic variety structure
that is compatible with its relative topology
and such that the inclusion map $G^\times\to G$ is real algebraic. Let
$$X\quad:=\quad\{\,(g,y)\in G^\times\times Y\,\,|\,\,gy=y\,\}.$$
Define $\pi:X\to G^\times$ and $f:X\to Y$ by $\pi(g,y)=g$ and $f(g,y)=y$.
For all $y\in Y$, we have $f^{-1}(y)=G_y^\times\times\{y\}$.
For some $y_0\in Y_0$, we have $G_{y_0}^\times\ne\emptyset$, so $f^{-1}(y_0)=G_{y_0}^\times\times\{y_0\}\ne\emptyset$.
It follows that $f^{-1}(Y_0)\ne\emptyset$.

For all $y\in Y_0$, $\Stab_G(y)$ is finite,
so $G_y^\times$ is finite, so $f^{-1}(y)=G_y^\times\times\{y\}$ is a finite,
and therefore compact, subset of $X$.
By \lref{lem-alg-implies-ae-proper},
fix an open subset $Y_1$ of~$Y_0$
such that $f^{-1}(Y_1)$ is a nonempty precompact subset of~$X$.
Then $\pi(f^{-1}(Y_1))$ is a precompact subset of $G^\times$.
So, since $\displaystyle{\pi(f^{-1}(Y_1))=\bigcup_{y\in Y_1}\,G_y^\times}$,
we see that $Y_1$ has bounded $G^\times$-stabilizers.
Because $f^{-1}(Y_1)\ne\emptyset$, it follows that $Y_1\ne\emptyset$.
\end{proof}

The next lemma (\lref{lem-loc-free-on-open}) follows from
semicontinuity of connected stabilizers, which is well-known.
However, to minimize the Lie theoretic requirements on the reader,
we provide a proof.

\begin{lem}\wrlab{lem-loc-free-on-open}
Let $G$ be a real algebraic group acting on a real algebraic variety $Y$.
Assume the $G$-action on $Y$ is real algebraic.
For all $y\in Y$, let $G_y:=\Stab_G(y)$.
Let $Y':=\{y\in Y\,|\,G_y\hbox{ is finite}\}$.
Then $Y'$~is an open subset of $Y$.
\end{lem}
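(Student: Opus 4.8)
The plan is to show that the complement $Y \setminus Y'$ is closed, i.e.\ that if $y_1, y_2, \ldots \to y_\infty$ in $Y$ with each $G_{y_j}$ infinite, then $G_{y_\infty}$ is infinite. The key structural fact about real algebraic groups I would invoke is that a real algebraic subgroup has finitely many connected components (it is the $\R$-points of an algebraic $\R$-group), so $G_y$ is infinite if and only if its identity component $(G_y)^\circ$ is positive-dimensional, if and only if the Lie algebra $\Lg_y := \{\xi \in \Lg \mid \xi \cdot y = 0\}$ of the stabilizer is nonzero, where $\Lg$ is the Lie algebra of $G$ and $\xi \mapsto \xi \cdot y$ is the infinitesimal action (a linear map $\Lg \to T_yY$ depending continuously — indeed algebraically, on the smooth locus — on $y$). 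So the statement reduces to: the set of $y$ where the linear map $\xi \mapsto \xi \cdot y$ has nontrivial kernel is closed, equivalently the set where this map is injective is open.

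First I would set up the infinitesimal action carefully. Passing to an affine open chart and then to the Zariski closure, we may assume $Y$ sits inside affine space and $G$ acts via polynomial formulas; the map $(\xi,y) \mapsto \xi \cdot y$ is then real algebraic in $y$ for each fixed $\xi$, hence continuous, and linear in $\xi$. Fix a basis $\xi_1, \ldots, \xi_m$ of $\Lg$. For $y \in Y$ define the $(\dim T_yY) \times m$ matrix $A(y)$ whose columns are $\xi_1 \cdot y, \ldots, \xi_m \cdot y$; then $\dim G_y = m - \mathrm{rank}\,A(y)$ when we are at a point where $Y$ is a manifold — here I need the hypothesis baked into "real algebraic variety," but to be safe I would work on the $\R$-points and argue directly with the kernel of $\xi \mapsto \xi\cdot y$ as a subspace of $\Lg$, avoiding dimension of $T_yY$. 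The point is that $\dim(\ker(\xi \mapsto \xi\cdot y))$ is an upper semicontinuous function of $y$ — rank of a matrix of continuous functions can only drop in the limit, so the kernel dimension can only jump up — hence $\{y : \dim\ker = 0\} = Y'$ is open.

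The step I expect to be the main obstacle is the passage between "$G_y$ finite" and "$\Lg_y = 0$", i.e.\ controlling connected components uniformly. One direction is immediate: $G_y$ finite forces $(G_y)^\circ = \{1\}$, so $\Lg_y = 0$. The converse — that $\Lg_y = 0$ implies $G_y$ finite — is exactly where I use that $G_y = \Stab_G(y)$ is a \emph{real algebraic} subgroup of $G$ (stated in the excerpt), hence has only finitely many connected components; if its Lie algebra vanishes each component is a point, so $G_y$ is finite. I would spell this out as a small preliminary lemma or cite it as standard Lie/algebraic group theory, consistent with the paper's stated policy of keeping Lie-theoretic demands minimal. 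With that equivalence in hand, $Y' = \{y : \Lg_y = 0\}$, and openness follows from the upper semicontinuity of $y \mapsto \dim\Lg_y = \dim\Lg - \mathrm{rank}(\xi \mapsto \xi\cdot y)$, the rank being that of a matrix with continuous entries. One caveat to handle: near a point where $Y$ fails to be smooth the ambient tangent space jumps, but since we only ever bound $\mathrm{rank}\,A(y)$ from below by taking limits (minors that are nonzero stay nonzero nearby) and $A(y)$ is built from globally defined algebraic vector fields on the Zariski closure, no smoothness is actually needed for the semicontinuity argument — the kernel of $\xi \mapsto \xi \cdot y$ computed in $\Lg$ is what matters, and it behaves upper-semicontinuously regardless.
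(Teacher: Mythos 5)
Your proof is correct, but it is not the paper's route --- in fact it is precisely the route the paper announces and then deliberately avoids: the remark immediately preceding the lemma says the statement ``follows from semicontinuity of connected stabilizers, which is well-known,'' but that an elementary proof is supplied ``to minimize the Lie theoretic requirements on the reader.'' Your argument is that semicontinuity proof: identify $Y'$ with $\{y\,|\,\ker(\xi\mapsto\xi\cdot y)=0\}$ and use that the rank of a matrix of continuous functions is lower semicontinuous. The two facts you import are both true but both carry real content: (i) that $\ker(\xi\mapsto\xi\cdot y)$ is the Lie algebra of $\Stab_G(y)$ --- the inclusion $\supseteq$ is trivial, but the inclusion $\subseteq$ (equivalently, that $\xi\cdot y=0$ forces $\exp(t\xi)y=y$) is the characteristic-zero smoothness of stabilizers, or else an ODE-uniqueness argument after locally extending the infinitesimal action to a smooth ambient neighborhood of a possibly singular point of $Y$; you label this direction ``immediate,'' and it is the one step I would press you to write out, though it is standard; and (ii) that a real algebraic group has finitely many connected components, so a discrete stabilizer is finite. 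The paper's proof instead runs a sequential argument with no Lie algebras of stabilizers at all: if $G_{y_i}$ is infinite it is uncountable (\lref{lem-ctbl-implies-finite}), hence accumulates at $1$ (\lref{lem-loc-ctbl-implies-ctbl}); taking suitable powers of small elements of $G_{y_i}$ pushes them into a fixed compact annulus ${\overline{A}}_j\subseteq\Lg$ disjoint from the other annuli, and passing to limits produces, for each $j$, an element $\exp(S_{\infty j})\in G_{y_\infty}$ with the $S_{\infty j}$ pairwise distinct, so $G_{y_\infty}$ is infinite. Your version is shorter and more conceptual at the price of citing structure theory; the paper's is longer but self-contained, resting only on Tarski--Seidenberg and point-set arguments, in keeping with its stated aims.
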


\begin{proof}
Say $y_1,y_2,\ldots\to y_\infty$ in~$Y$.
Assume, for all integers $i\ge1$, that $y_i\notin Y'$,
{\it i.e.}, that $G_{y_i}$ is infinite.
We wish to show that $y_\infty\notin Y'$.

Let $\Lg$ be the Lie algebra of~$G$.
Fix a positive definite quadratic form $Q:\Lg\to[0,\infty)$.
For all~$X\in\Lg$, let $|X|:=\sqrt{Q(X)}$.
For all $\varepsilon>0$, let ${\overline{B}}_\varepsilon:=Q^{-1}([0,\varepsilon^2])$
be the closed ball in $\Lg$ centered at $0_\Lg$ with radius~$\varepsilon$.
Multiplying $Q$ by a scalar if necessary, we may assume that the exponential map
$\exp:\Lg\to G$ is one-to-one on ${\overline{B}}_1$.

For every integer $j\ge1$, let $\delta_j:=1/j$.
For all integers $j\ge1$, let ${\overline{A}}_j:=Q^{-1}([\delta_{2j+1}^2,\delta_{2j}^2])$
be the closed annulus in $\Lg$ centered at $0_\Lg$ with radii $\delta_{2j+1}$ and $\delta_{2j}$.
Since $1=\delta_1>\delta_2>\cdots$,
it follows that the sets ${\overline{A}}_1,{\overline{A}}_2,\ldots$ are pairwise disjoint,
and that ${\overline{A}}_1\cup{\overline{A}}_2\cup\cdots\subseteq{\overline B}_1$.

For all integers $i\ge1$, let
$\Lg_i:=\{X\in\Lg\,|\,\exp(X)\in G_{y_i}\}$.
For all $\varepsilon>0$, $\exp({\overline{B}}_\varepsilon)$ contains an open neighborhood in $G$ of $1$.
For all integers $i\ge1$, because $G_{y_i}$ is infinite,
we see (by \lref{lem-ctbl-implies-finite}) that $G_{y_i}$ is uncountable,
and then (by \lref{lem-loc-ctbl-implies-ctbl}) that, for all $\varepsilon>0$,
the set $G_{y_i}\cap(\exp({\overline{B}}_\varepsilon))$ is uncountable,
which implies that $\Lg_i\cap{\overline{B}}_\varepsilon$ is uncountable.

For every integer $j\ge1$, let $\varepsilon_j:=\delta_{2j}-\delta_{2j+1}$.
For all integers $i,j\ge1$, $\Lg_i\cap{\overline{B}}_{\varepsilon_j}$ is uncountable,
so fix $R_{ij}\in\Lg_i\cap{\overline{B}}_{\varepsilon_j}$ such that $R_{ij}\ne0$; then
$0<|R_{ij}|\le\varepsilon_j<\delta_{2j}$,
so fix an integer $n_{ij}\ge1$ such that
$$n_{ij}|R_{ij}|\le\delta_{2j}\qquad\hbox{and}\qquad(n_{ij}+1)|R_{ij}|\ge\delta_{2j}.$$
For all integers $i,j\ge1$, let $S_{ij}:=n_{ij}R_{ij}$.

Let $i,j\ge1$ be integers.
Then $|S_{ij}|=n_{ij}|R_{ij}|\le\delta_{2j}$.
Also, because $R_{ij}\in{\overline{B}}_{\varepsilon_j}$,
it follows that $\varepsilon_j\ge|R_{ij}|$, so
$$|S_{ij}|+\varepsilon_j\quad\ge\quad n_{ij}|R_{ij}|+|R_{ij}|\quad=\quad(n_{ij}+1)|R_{ij}|\quad\ge\quad\delta_{2j},$$
so $|S_{ij}|\ge\delta_{2j}-\varepsilon_j=\delta_{2j+1}$.
Then $\delta_{2j+1}\le|S_{ij}|\le\delta_{2j}$,
so $S_{ij}\in{\overline{A}}_j$.

Let $i,j\ge1$ be integers.
Since $R_{ij}\in\Lg_i$, we have $\exp(R_{ij})\in G_{y_i}$.
Then $\exp(n_{ij}R_{ij})=(\exp(R_{ij}))^{n_{ij}}\in G_{y_i}$.
So, as $S_{ij}=n_{ij}R_{ij}$, we have $\exp(S_{ij})\in G_{y_i}$.
That is, $(\exp(S_{ij}))y_i=y_i$.

Let $j\ge1$ be an integer.
Then $S_{1j},S_{2j},\ldots$ is a sequence in the compact set ${\overline{A}}_j$,
so choose $S_{\infty j}\in{\overline{A}}_j$ such that some subsequence of
$S_{1j},S_{2j},\ldots$ converges to $S_{\infty j}$.
For all integers $i\ge1$, $(\exp(S_{ij}))y_i=y_i$,
and it follows that $(\exp(S_{\infty j}))y_\infty=y_\infty$,
{\it i.e.}, that $\exp(S_{\infty j})\in G_{y_\infty}$.

The sets ${\overline{A}}_1,{\overline{A}}_2,\ldots$ are pairwise disjoint,
so $S_{\infty1},S_{\infty2},\ldots$ are distinct.
So, as $\exp$ is one-to-one on~${\overline{B}}_1$ and
$S_{\infty1},S_{\infty2},\ldots\in{\overline{A}}_1\cup{\overline{A}}_2\cup\cdots\subseteq{\overline{B}}_1$,
we see that $\exp(S_{\infty 1}),\exp(S_{\infty 2}),\ldots$ are distinct.
Since they are all elements of $G_{y_\infty}$,
we conclude that $G_{y_\infty}$ is infinite, so $y_\infty\notin Y'$.
\end{proof}

\section{Some smooth real algebraic dynamics\wrlab{sect-real-alg-act}}

Let $G$ be a real algebraic group and let $G^\circ$ be an open subgroup of~$G$.
Then $G$ and $G^\circ$ are $C^\omega$ Lie groups.
An open subgroup of a topological group is closed, so $G^\circ$ is a closed subgroup of~$G$.
Let $1:=1_G=1_{G^\circ}$ be the identity element in $G$ and in $G^\circ$.

Let $G$ act on a smooth real algebraic variety $M$.
Assume that the $G$-action on $M$ is real algebraic.
Let $M^\circ$ be a nonempty $G^\circ$-invariant open subset of~$M$.
Then $M$ and $M^\circ$ are $C^\omega$ manifolds
and the actions of $G$ on~$M$ and of $G^\circ$ on $M^\circ$ are $C^\omega$ actions.

Let $G$ act on a sequence of smooth real algebraic varieties $M_1,M_2,\ldots$.
Assume that these $G$-actions are all real algebraic.
Let $M_0:=M$.
Let
$$\cdots\quad\to\quad M_2\quad\to\quad M_1\quad\to\quad M_0\quad=\quad M$$
be a sequence of $G$-equivariant real algebraic maps.
(In the basic example we have in mind, $M_i$ is the $i$-th order frame bundle,
but other jet bundle towers will also satisfy the assumptions listed below.)

For all integers $i>0$, let $M_i^\circ$ be the preimage of $M^\circ$ under the composite
$M_i\to\cdots\to M_0=M$. Let $M_0^\circ:=M^\circ$.
Then, for all integers $i\ge0$, $M_i^\circ$ is an open $G^\circ$-invariant subset of $M_i$.

For all integers $i\ge0$, for all integers $j>i$,
let $\pi_i^j:M_j^\circ\to M_i^\circ$ be
the restriction to $M_j^\circ$ of the composite $M_j\to\cdots\to M_i$.
For all integers $i\ge0$, let $\pi_i^i:M_i^\circ\to M_i^\circ$ be the identity map.

Throughout this section, we assume that $M_0,M_1,M_2,\ldots$ are pairwise disjoint sets.
Let $M_\bullet^\circ:=M_0^\circ\cup M_1^\circ\cup M_2^\circ\cup\cdots$.
For all integers $i\ge0$, for all $x\in M_i^\circ$, let
$\displaystyle{F_x^\circ:=\bigcup_{j=i}^\infty\,\,(\pi_i^j)^{-1}(x)}$.

Throughout this section, we assume, for all $x\in M_\bullet^\circ$,
that the action of $\Stab_{G^\circ}(x)$ on $F_x^\circ$ is effective.
When $M^\circ$ is connected and $G^\circ$ acts effectively on $M^\circ$,
this effectiveness condition is satisfied if
the tower $\cdots\to M_2\to M_1\to M_0=M$ is any typical tower of jet bundles over~$M$.

\begin{lem}\wrlab{lem-triv-stab-in-prolong}
$\forall u\in M_\bullet^\circ$,
$\exists v\in F_u^\circ$ such that $\Stab_{G^\circ}(v)=\{1\}$.
\end{lem}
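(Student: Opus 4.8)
The plan is to fix $u\in M_\bullet^\circ$, say $u\in M_i^\circ$, and to locate $v\in F_u^\circ$ with trivial $G^\circ$-stabilizer by exploiting the Descending Chain Condition on real algebraic subgroups of $G$, applied to the tower of fibers over $u$. First I would observe that for each $j\ge i$ the map $\pi_i^j:M_j^\circ\to M_i^\circ$ is $G^\circ$-equivariant and real algebraic, so the fiber $F_j:=(\pi_i^j)^{-1}(u)$ is a $\Stab_{G^\circ}(u)$-invariant real algebraic subvariety of $M_j^\circ$, and for $v\in F_j$ we have $\Stab_{G^\circ}(v)\subseteq\Stab_{G^\circ}(u)$. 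Since the stabilizers appearing here are all real algebraic subgroups of the fixed real algebraic group $G$, the key is to run a minimization argument: among all choices of $j\ge i$ and $v\in F_j$, pick one for which $\Stab_{G^\circ}(v)$ is as small as possible in a sense controlled by the DCC.

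The precise mechanism I would use is the following. For each $j\ge i$, let $H_j$ be the intersection $\bigcap_{v\in F_j}\Stab_{G^\circ}(v)$, i.e.\ the kernel of the action of $\Stab_{G^\circ}(u)$ on the fiber $F_j$ (this is a real algebraic subgroup of $G$, being an intersection of real algebraic subgroups, or more directly the pointwise stabilizer of a real algebraic set). Because $F_{j+1}$ maps onto a subset of $F_j$ under $\pi_j^{j+1}$ and stabilizers only shrink as we go up the tower, we get a descending chain $H_i\supseteq H_{i+1}\supseteq\cdots$ of real algebraic subgroups of $G$. By the DCC this chain stabilizes, say $H_j=H_\infty$ for all $j\ge j_0$. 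Now the hypothesis that $\Stab_{G^\circ}(u)$ acts effectively on $F_u^\circ=\bigcup_{j\ge i}F_j$ forces $H_\infty=\{1\}$: any element of $H_\infty$ fixes every point of every $F_j$ with $j\ge j_0$, hence — using that $F_j$ for $j<j_0$ is the image of $F_{j_0}$ and that the maps are equivariant — fixes all of $F_u^\circ$, so by effectiveness it is $1$. Thus the action of $\Stab_{G^\circ}(u)$ on $F_{j_0}$ has trivial kernel.

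Having reduced to a single fiber $F:=F_{j_0}$ on which the finite-or-not group $\Gamma:=\Stab_{G^\circ}(u)$ acts with trivial kernel, I would then pass from "trivial kernel" to "some point with trivial stabilizer". This is where the DCC re-enters: order things so that $\Stab_{G^\circ}(v)$, $v\in F$, is minimal for inclusion among real algebraic subgroups (such a minimal element exists by DCC). Call it $L=\Stab_{G^\circ}(v_0)$. If $L\ne\{1\}$, pick $1\ne g\in L$; since $g$ is not in the kernel of the $\Gamma$-action on $F$, there is $w\in F$ with $gw\ne w$, so $g\notin\Stab_{G^\circ}(w)$. I would want to produce from $v_0$ and $w$ a point whose stabilizer is properly contained in $L$, contradicting minimality — but this does not follow from the one fiber $F$ alone, so instead I would go one step further up the tower: replace $v_0$ by a preimage $v_0'\in F_{j_0+1}$ and note $\Stab_{G^\circ}(v_0')\subseteq L$, and iterate, using effectiveness on the full $F_u^\circ$ together with DCC to force the stabilizer down to $\{1\}$ in finitely many steps.

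The main obstacle is making this last descent rigorous: showing that if $\Stab_{G^\circ}(v)\ne\{1\}$ for a point $v$ somewhere in the tower, then one can always move to a point higher up (or in a different fiber component) whose stabilizer is strictly smaller, so that DCC on real algebraic subgroups actually terminates the process at $\{1\}$. The right way to organize this is probably to work directly with the chain $H_j$ of pointwise-fiber-stabilizers from the start, get $H_{j_0}=\{1\}$ as above, and then — for that fixed $j_0$ — run a minimization of $\Stab_{G^\circ}(v)$ over $v$ in the single variety $F_{j_0}$, observing that triviality of the pointwise stabilizer $H_{j_0}$ of $F_{j_0}$ combined with minimality of $\Stab_{G^\circ}(v_0)$ gives a genuine contradiction unless $\Stab_{G^\circ}(v_0)=\{1\}$, because the "bad" element $g\in\Stab_{G^\circ}(v_0)\setminus\{1\}$ together with a point $w$ it moves can be combined via the orbit map to exhibit a point of $F_{j_0}\times F_{j_0}$ — or rather a point in a higher jet $F_{j_0+1}$ lying over both — with strictly smaller stabilizer. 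I would need to check carefully that the tower's maps are such that this "simultaneous lift" exists; this is exactly the content of the effectiveness hypothesis on $F_u^\circ$, and unwinding it cleanly is the crux of the argument.
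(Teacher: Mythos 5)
Your proposal has a genuine gap at exactly the point you yourself flag as the crux, and the gap is not merely technical. You reduce to a single fiber $F_{j_0}$ on which $\Gamma=\Stab_{G^\circ}(u)$ acts with trivial kernel, and then try to extract a point of $F_{j_0}$ with trivial stabilizer; but trivial kernel never implies the existence of a free point (consider $S_3$ acting on a three-element set), and your proposed repair --- lifting $v_0$ and a point $w$ moved by $g$ to a common point higher in the tower --- is not something the hypotheses supply: the tower maps are not assumed surjective, and nothing guarantees a ``simultaneous lift'' of two points of $F_{j_0}$ to one point of $F_{j_0+1}$. Two smaller problems: your chain $H_i\supseteq H_{i+1}\supseteq\cdots$ is not actually descending unless each $F_{j+1}$ surjects onto $F_j$ (you would need to intersect cumulatively over $F_i\cup\cdots\cup F_j$), and $\Stab_{G^\circ}(v)=G^\circ\cap\Stab_G(v)$ need not be a real algebraic subgroup since $G^\circ$ is only an open subgroup, so the DCC should be applied to the groups $\Stab_G(v)$ rather than to their intersections with $G^\circ$.

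The missing idea is that the effectiveness hypothesis is available at \emph{every} point of $M_\bullet^\circ$, not only at $u$, and the paper applies it at the minimizing point rather than at $u$. Namely: by the DCC choose $v\in F_u^\circ$ with $S:=\Stab_G(v)$ minimal in $\{\Stab_G(x)\,|\,x\in F_u^\circ\}$. For every $w\in F_v^\circ$ one has $\Stab_G(w)\subseteq S$ and $w\in F_v^\circ\subseteq F_u^\circ$, so minimality forces $\Stab_G(w)=S$. Hence every $s\in\Stab_{G^\circ}(v)=G^\circ\cap S$ fixes all of $F_v^\circ$ pointwise, and effectiveness of the action of $\Stab_{G^\circ}(v)$ on $F_v^\circ$ gives $s=1$. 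This bypasses your fiber-by-fiber kernel computation entirely and needs only one application of the DCC over the whole fiber tower $F_u^\circ$.
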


\begin{proof}
The set of real algebraic subgroups of $G$ has the DCC (under inclusion),
so fix a minimal element $S$ in $\{\Stab_G(x)\,|\,x\in F_u^\circ\}$.
Fix $v\in F_u^\circ$ such that $\Stab_G(v)=S$.
We will show that $\Stab_{G^\circ}(v)=\{1\}$.
Let $S^\circ:=G^\circ\cap S=\Stab_{G^\circ}(v)$.
Let $s\in S^\circ$.
We wish to prove $s=1$.

Since the action of $S^\circ=\Stab_{G^\circ}(v)$ on~$F_v^\circ$ is effective,
it suffices to show, for all $w\in F_v^\circ$, that $sw=w$.
Let $w\in F_v^\circ$.
We wish to prove that $sw=w$, {\it i.e.}, that $s\in\Stab_G(w)$.

As $\Stab_G(w)\subseteq\Stab_G(v)=S$
and $w\in F_v^\circ\subseteq F_u^\circ$, we see, by minimality of $S$,
that $\Stab_G(w)=S$.
Then $s\in S^\circ\subseteq S=\Stab_G(w)$.
\end{proof}

\begin{cor}\wrlab{cor-triv-stab-above}
$\exists v\in M_\bullet^\circ$ such that $\Stab_{G^\circ}(v)=\{1\}$.
\end{cor}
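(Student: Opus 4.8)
The plan is to read this off immediately from \lref{lem-triv-stab-in-prolong}. That lemma already produces, for \emph{every} $u\in M_\bullet^\circ$, a point $v\in F_u^\circ$ whose $G^\circ$-stabilizer is trivial. So the only thing the corollary adds is the assertion that $M_\bullet^\circ$ is nonempty (so that there is some $u$ to feed into the lemma) together with the observation that the resulting $v$ actually lies in $M_\bullet^\circ$.

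First I would note that $M_\bullet^\circ\ne\emptyset$: by hypothesis $M^\circ$ is a nonempty $G^\circ$-invariant open subset of $M$, and, by construction, $M_0^\circ=M^\circ$ and $M_\bullet^\circ=M_0^\circ\cup M_1^\circ\cup M_2^\circ\cup\cdots\supseteq M^\circ$. So I would fix any $u\in M^\circ\subseteq M_\bullet^\circ$; here $u\in M_i^\circ$ with $i=0$. Next I would apply \lref{lem-triv-stab-in-prolong} to this $u$ to obtain $v\in F_u^\circ$ with $\Stab_{G^\circ}(v)=\{1\}$. Finally, since $F_u^\circ=\bigcup_{j=i}^\infty(\pi_i^j)^{-1}(u)$ and each set $(\pi_i^j)^{-1}(u)$ is contained in $M_j^\circ\subseteq M_\bullet^\circ$, we conclude $v\in M_\bullet^\circ$, which is exactly the claim.

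I do not expect any real obstacle: the entire substance of the statement is carried by \lref{lem-triv-stab-in-prolong}, and what remains is purely the bookkeeping of the definitions introduced at the start of this section --- that $M^\circ=M_0^\circ$ is nonempty and that $F_u^\circ\subseteq M_\bullet^\circ$. The only point requiring (minimal) care is keeping straight the index $i$ over which $F_u^\circ$ sits, and that is immediate once $u\in M^\circ$ is chosen.
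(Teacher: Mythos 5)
Your proposal is correct and is essentially the paper's own argument: fix $u\in M_\bullet^\circ$, note $F_u^\circ\subseteq M_\bullet^\circ$, and apply \lref{lem-triv-stab-in-prolong}. Your extra remarks (that $M_\bullet^\circ\ne\emptyset$ because $M^\circ=M_0^\circ$ is nonempty by hypothesis) merely make explicit a point the paper leaves implicit.
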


\begin{proof}
Fix any $u\in M_\bullet^\circ$.
Then $F_u^\circ\subseteq M_\bullet^\circ$.
Apply \lref{lem-triv-stab-in-prolong}.
\end{proof}

\begin{lem}\wrlab{lem-free-on-open}
For some integer $l\ge0$, there exists a nonempty open subset $W$ of $M_l^\circ$
such that, for all $w\in W$, we have $\Stab_{G^\circ}(w)=\{1\}$.
\end{lem}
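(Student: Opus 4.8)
The idea is to combine \cref{cor-triv-stab-above}, which gives a single point $v$ somewhere in the tower with trivial $G^\circ$-stabilizer, with \lref{lem-loc-free-on-open} and \lref{lem-force-bdd-punct-stabs} and \lref{lem-bdd-punct-stab-free-on-open} to upgrade this one good point to a whole open set of good points at some fixed level $l$. First I would invoke \cref{cor-triv-stab-above} to fix an integer $l\ge0$ and a point $v_0\in M_l^\circ$ with $\Stab_{G^\circ}(v_0)=\{1\}$; in particular $\Stab_{G^\circ}(v_0)$ is finite. The catch is that \lref{lem-loc-free-on-open} and \lref{lem-force-bdd-punct-stabs} are stated for the whole group $G$ acting on $Y$, not for the open subgroup $G^\circ$ acting on the $G^\circ$-invariant open set $M_l^\circ$, so I must first recast $M_l^\circ$ as a real algebraic variety in its own right (using the algebraic-localization remark from \S2 that an open subset inherits a real algebraic structure compatible with its topology) on which the real algebraic group $G^\circ$ acts real algebraically. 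With that in place, \lref{lem-loc-free-on-open} (applied with $G^\circ$ in the role of $G$ and $M_l^\circ$ in the role of $Y$) shows that $Y':=\{y\in M_l^\circ\mid \Stab_{G^\circ}(y)\text{ is finite}\}$ is open, and $v_0\in Y'$, so $Y'$ is a nonempty open subset of $M_l^\circ$.

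\textbf{Continuing.} Next I would apply \lref{lem-force-bdd-punct-stabs}, again with $G^\circ$ in the role of $G$ and $Y'$ in the role of $Y_0$: since $Y'$ is a nonempty open set on which every stabilizer $\Stab_{G^\circ}(y)$ is finite, the lemma produces a nonempty open subset $Y_1\subseteq Y'$ with bounded $(G^\circ)^\times$-stabilizers. Now apply \lref{lem-bdd-punct-stab-free-on-open} with $G^\circ$ in the role of $G$, $M_l^\circ$ (or just $Y_1$) in the role of $Z$, and $Y_1$ in the role of $Z_1$: the hypotheses (locally compact Hausdorff group, $C^0$ action on a first countable space, open subset with bounded $(G^\circ)^\times$-stabilizers) all hold, so $W:=\{w\in Y_1\mid (G^\circ)^\times_w=\emptyset\}$ is an open subset of $Y_1$, hence open in $M_l^\circ$. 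For a point $w\in W$, having empty $(G^\circ)^\times$-stabilizer means exactly $\Stab_{G^\circ}(w)=\{1\}$.

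\textbf{Why $W$ is nonempty, and the main obstacle.} It remains to check $W\ne\emptyset$. The point $v_0$ from \cref{cor-triv-stab-above} has $\Stab_{G^\circ}(v_0)=\{1\}$, so $v_0\in W$ \emph{provided} $v_0\in Y_1$ — but $Y_1$ is produced by \lref{lem-force-bdd-punct-stabs} as \emph{some} nonempty open subset of $Y'$, not necessarily containing $v_0$. So a little care is needed here: either (i) observe that any point of $Y_1$ whose $G^\circ$-stabilizer is trivial lies in $W$, and argue that $W$ is nonempty directly — but in fact the cleanest route is (ii) to note that \lref{lem-force-bdd-punct-stabs}'s proof gives a $Y_1$ on which, after further shrinking via \lref{lem-bdd-punct-stab-free-on-open}, the set $W$ is just $\{w\in Y_1\mid \Stab_{G^\circ}(w)=\{1\}\}$, and this is nonempty because on $Y'$ the stabilizers are finite and the locus of trivial stabilizer is, by the same DCC/minimality reasoning as in \lref{lem-triv-stab-in-prolong} applied within a single $G^\circ$-orbit closure, nonempty in every nonempty $G^\circ$-invariant open piece. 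I expect the genuine friction to be exactly this nonemptiness bookkeeping — lining up $v_0$ with the open set that survives two rounds of shrinking — together with the routine but necessary verification that $G^\circ$ acting on the localized variety $M_l^\circ$ genuinely satisfies the "real algebraic group acting real algebraically on a real algebraic variety" hypotheses of the \S6 lemmas. Once those are dispatched, chaining \cref{cor-triv-stab-above} $\to$ \lref{lem-loc-free-on-open} $\to$ \lref{lem-force-bdd-punct-stabs} $\to$ \lref{lem-bdd-punct-stab-free-on-open} yields the desired nonempty open $W\subseteq M_l^\circ$ with trivial $G^\circ$-stabilizers everywhere.
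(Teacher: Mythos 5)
There are two genuine gaps here. First, a hypothesis mismatch: you propose to apply \lref{lem-loc-free-on-open} and \lref{lem-force-bdd-punct-stabs} with $G^\circ$ in the role of the real algebraic group and $M_l^\circ$ in the role of the real algebraic variety. But $G^\circ$ is only assumed to be an \emph{open} subgroup of $G$, and open subgroups of real algebraic groups need not be real algebraic (think of $\R_{>0}$ inside $\R\backslash\{0\}$); likewise $M_l^\circ$ is the preimage of an arbitrary nonempty $G^\circ$-invariant open subset $M^\circ$ of $M$, and the algebraic-localization remark in \S\ref{sect-terminology} applies only to complements of single points, not to arbitrary open subsets. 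So the recasting you rely on is not available. The paper sidesteps this by working throughout with $G$ acting on the honest variety $Y:=M_k$: it first converts ``$\Stab_{G^\circ}(q)=\{1\}$'' into ``$\Stab_G(q)$ is finite'' via \lref{lem-loc-ctbl-implies-ctbl} and \lref{lem-ctbl-implies-finite}, applies \lref{lem-loc-free-on-open} and \lref{lem-force-bdd-punct-stabs} to the $G$-action on $M_k$, and only afterwards passes from bounded $G^\times$-stabilizers to bounded $G^{\circ\times}$-stabilizers using that $G^\circ$ is closed in $G$.

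Second, and more seriously, the nonemptiness of $W$ --- which you correctly flag as the main obstacle --- is not actually resolved by either of your options. Option (i) gives no argument, and option (ii) appeals to ``the DCC/minimality reasoning of \lref{lem-triv-stab-in-prolong} within a single $G^\circ$-orbit closure,'' but that lemma produces trivial stabilizers only by moving \emph{up the tower} through the fiber $F_u^\circ$, not within a fixed level; there is no reason the trivial-stabilizer locus at a fixed level $l$ should meet the particular open set $Y_1$ produced by \lref{lem-force-bdd-punct-stabs}. The paper's resolution is to climb the tower a second time: having found $Y_1\subseteq M_k^\circ$ with bounded punctured stabilizers, it fixes an arbitrary $u\in Y_1$, applies \lref{lem-triv-stab-in-prolong} to get $v\in(\pi_k^l)^{-1}(u)$ with $\Stab_{G^\circ}(v)=\{1\}$ at some possibly higher level $l\ge k$, and then works in $Z_1:=(\pi_k^l)^{-1}(Y_1)$, which inherits bounded $G^{\circ\times}$-stabilizers (equivariance of $\pi_k^l$ makes stabilizers only shrink upward) and visibly contains $v$. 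Then \lref{lem-bdd-punct-stab-free-on-open} applies to $Z_1$ and the resulting open set $W$ contains $v$, hence is nonempty. Without this second ascent your argument cannot be completed as stated.
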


\begin{proof}
By \cref{cor-triv-stab-above} (with $v$ replaced by~$q$),
fix $q\in M_\bullet^\circ$ such that $\Stab_{G^\circ}(q)=\{1\}$.
Fix an integer $k\ge1$ such that $q\in M_k^\circ$.
Let $Y:=M_k$.
For all $y\in Y$, let $G_y:=\Stab_G(y)$.

We have $G_q\cap G^\circ=\Stab_{G^\circ}(q)=\{1\}$,
so $G_q\cap G^\circ$ is countable.
So, as $G^\circ$ is an open neighborhood in $G$ of $1$,
we see (by \lref{lem-loc-ctbl-implies-ctbl}) that $G_q$ is countable,
and (by \lref{lem-ctbl-implies-finite}) that $G_q$ is finite.

Let $Y':=\{y\in Y\,|\,G_y\hbox{ is finite}\}$
and $Y_0:=M_k^\circ\cap Y'$.
Then $q\in Y_0$, so $Y_0\ne\emptyset$.
By \lref{lem-loc-free-on-open}, $Y'$ is an open subset of $Y$,
so $Y_0$ is an open subset of $M_k^\circ$.
Since $Y_0$ is open in $M_k^\circ$, and since $M_k^\circ$ is open in $M_k=Y$,
it follows that $Y_0$ is open in $Y$.
As $Y_0\subseteq Y'$, we see, for all $y\in Y_0$, that $G_y=\Stab_G(y)$ is finite.
Let $G^\times:=G\backslash\{1\}$ and $G^{\circ\times}:=G^\circ\backslash\{1\}$.
By \lref{lem-force-bdd-punct-stabs},
choose a nonempty open subset $Y_1$ of $Y_0$
such that $Y_1$ has bounded $G^\times$-stabilizers.
Then, as $G^\circ$ is a closed subgroup of~$G$,
it follows that $Y_1$ has bounded $G^{\circ\times}$-stabilizers.

Fix $u\in Y_1$.
By \lref{lem-triv-stab-in-prolong},
fix $v\in F_u^\circ$ such that $\Stab_{G^\circ}(v)=\{1\}$.
Fix an integer $l\ge k$ such that
$v\in(\pi_k^l)^{-1}(u)$.
Let $\pi:=\pi_k^l:M_l^\circ\to M_k^\circ$.
Since $Y_1$ is open in $Y_0$, and since $Y_0$ is open in $M_k^\circ$,
it follows that $Y_1$~is open in $M_k^\circ$.
Let $Z_1:=\pi^{-1}(Y_1)$.
Then $Z_1$ is open in $Z:=M_l^\circ$.
As $Y_1$ has bounded $G^{\circ\times}$-stabilizers,
$Z_1$ also has bounded $G^{\circ\times}$-stabilizers.

For all $z\in Z$, let
$G_z^{\circ\times}:=(\Stab_{G^\circ}(z))\backslash\{1\}$
be the $G^{\circ\times}$-stabilizer of~$z$.
Let $W:=\{z\in Z_1\,|\,G_z^{\circ\times}=\emptyset\}$.
By \lref{lem-bdd-punct-stab-free-on-open} (with $G$ replaced by~$G^\circ$),
$W$ is an open subset of $Z_1$.
Since $W$ is open in $Z_1$, and since $Z_1$ is open in $Z$,
it follows that $W$ is open in $Z=M_l^\circ$.

We have $v\in(\pi_k^l)^{-1}(u)=\pi^{-1}(u)\subseteq\pi^{-1}(Y_1)=Z_1$.
Also, we know that $\Stab_{G^\circ}(v)=\{1\}$, {\it i.e.}, that $G_v^{\circ\times}=\emptyset$.
Then $v\in W$, so $W\ne\emptyset$.
By definition of $W$, for all $w\in W$, $G_w^{\circ\times}=\emptyset$,
{\it i.e.}, $\Stab_{G^\circ}(w)=\{1\}$.
\end{proof}

\begin{prop}\wrlab{prop-free-proper-in-prolong}
For some integer $l\ge0$,
there exists a nonempty $G^\circ$-invariant open subset $N$ of $M_l^\circ$
such that the action of $G^\circ$ on $N$ is free and proper.
\end{prop}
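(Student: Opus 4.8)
The plan is to combine \lref{lem-free-on-open}, which supplies an open set with trivial $G^\circ$-stabilizers, with \lref{lem-cptstab-proper}, which manufactures properness out of compact stabilizers; the bridge between the two is the observation that a trivial $G^\circ$-stabilizer, together with the openness of $G^\circ$ in $G$, forces the ambient $G$-stabilizer to be finite, hence compact.

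First I would invoke \lref{lem-free-on-open} to fix an integer $l\ge0$ and a nonempty open subset $W$ of $M_l^\circ$ with $\Stab_{G^\circ}(w)=\{1\}$ for every $w\in W$. For such a $w$, the group $\Stab_G(w)\cap G^\circ=\Stab_{G^\circ}(w)=\{1\}$ is countable, so, because $G^\circ$ is an open neighborhood of $1$ in the second countable group $G$, \lref{lem-loc-ctbl-implies-ctbl} shows that $\Stab_G(w)$ is countable, and then \lref{lem-ctbl-implies-finite} shows that $\Stab_G(w)$ is finite, hence a compact subset of $G$. Conjugating, every point of the nonempty $G$-invariant open subset $V_0:=GW$ of $M_l$ has compact $G$-stabilizer. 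Applying \lref{lem-cptstab-proper} to the real algebraic $G$-action on $M_l$ with this $V_0$, I would fix a nonempty $G$-invariant open subset $U$ of $GW$ on which $G$ acts properly.

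It remains to extract from $U$ a set on which $G^\circ$ acts both freely and properly, and for this I would set $N:=G^\circ(U\cap W)$. Since any $u\in U\subseteq GW$ can be written $u=gw$ with $g\in G$ and $w\in W$, and $G$-invariance of $U$ gives $w=g^{-1}u\in U$, the set $U\cap W$ is nonempty, so $N$ is nonempty; it is $G^\circ$-invariant and open by construction, and $N\subseteq G^\circ W\subseteq M_l^\circ$ because $M_l^\circ$ is $G^\circ$-invariant. Every point of $N$ is a $G^\circ$-translate of a point of $W$, hence, conjugating stabilizers, has trivial $G^\circ$-stabilizer, so $G^\circ$ acts freely on $N$. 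Finally $N\subseteq U$ because $U$ is $G$-invariant; since an open subgroup of a topological group is closed, $G^\circ$ is a closed subgroup of $G$, so the proper $G$-action on $U$ restricts to a proper $G^\circ$-action on $U$, which in turn restricts to a proper $G^\circ$-action on the $G^\circ$-invariant subset $N$. Thus $N$ has all the required properties.

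I expect the crux to be the passage from $G^\circ$ back to $G$: \lref{lem-cptstab-proper} is a genuinely algebraic statement needing a real algebraic group, while $G^\circ$ need not be real algebraic, so one cannot apply that lemma to $G^\circ$ directly. The two counting lemmas are exactly what let one promote ``trivial $G^\circ$-stabilizer'' to ``compact $G$-stabilizer'', run the properness argument inside the larger algebraic group $G$, and then descend back to $G^\circ$; checking that freeness survives this descent is what dictates the somewhat fussy choice $N=G^\circ(U\cap W)$ rather than $U$ itself.
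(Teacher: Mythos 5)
Your proposal is correct and follows essentially the same route as the paper: \lref{lem-free-on-open} plus the two counting lemmas to get finite (hence compact) $G$-stabilizers on $GW$, then \lref{lem-cptstab-proper} applied to the full algebraic group $G$, and finally descent to $G^\circ$ using that $G^\circ$ is closed in $G$. The only cosmetic difference is that you take $N=G^\circ(U\cap W)$ where the paper takes $N=U\cap(G^\circ W)$; both choices are nonempty, open, $G^\circ$-invariant, contained in $G^\circ W$ (giving freeness) and in $U$ (giving properness).
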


\begin{proof}
By \lref{lem-free-on-open}, fix an integer $l\ge0$
and a nonempty open subset $W$ of $M_l^\circ$
such that, for all $w\in W$, $\Stab_{G^\circ}(w)=\{1\}$.
Then $G^\circ W$ is a $G^\circ$-invariant open subset of $M_l^\circ$,
and the $G^\circ$-action on $G^\circ W$ is free.

Let $V:=M_l$. For all $v\in V$, let $G_v:=\Stab_G(v)$.
For all $w\in W$, we have $G_w\cap G^\circ=\Stab_{G^\circ}(w)=\{1\}$,
so, since $G^\circ$ is an open neighborhood in $G$ of $1$,
we conclude (by \lref{lem-loc-ctbl-implies-ctbl}) that $G_w$ is countable,
hence (by \lref{lem-ctbl-implies-finite}) finite.
Let $V_0:=GW$.
Then, for all $v\in V_0$, the stabilizer $G_v=\Stab_G(v)$ is finite, hence compact.
Since $W$ is open in $M_l^\circ$ and since $M_l^\circ$ is open in $M_l=V$,
it follows that $W$ is open in $V$.
Then $V_0=GW$ is a $G$-invariant open subset of $V$.
Since $W\ne\emptyset$, we have $V_0=GW\ne\emptyset$.
By \lref{lem-cptstab-proper}, choose a nonempty $G$-invariant open subset $U$ of~$V_0$
such that the action of $G$ on $U$ is proper.

Since $U$ is a nonempty $G$-invariant subset of $V_0=GW$, it follows that $U\cap W\ne\emptyset$.
Then $N:=U\cap(G^\circ W)$ is nonempty and $G^\circ$-invariant.
Since $U$ is open in $V_0$ and since $V_0$ is open in $V$, it follows that $U$ is open in $V=M_l$.
So, since $G^\circ W$ is open in $M_l^\circ$,
we conclude that $N=U\cap(G^\circ W)$ is open in $M_l^\circ$.

The $G^\circ$-action on~$G^\circ W$ is free,
and $N$ is a $G^\circ$-invariant subset of~$G^\circ W$.
Therefore, the $G^\circ$-action on~$N$ is free.

Because the $G$-action on $U$ is proper,
and because $G^\circ$ is a closed subgroup of $G$,
we see that the $G^\circ$-action on $U$ is proper, as well.
Then, because $N$ is a $G^\circ$-invariant subset of $U$,
it follows that the $G^\circ$-action on $N$ is also proper.
\end{proof}

\begin{cor}\wrlab{cor-free-proper-in-prolong}
For some integer $l\ge0$, there exist
\begin{itemize}
\item a nonempty $G^\circ$-invariant open subset $N_0$ of $M_l^\circ$ and
\item a closed subset $T$ of $N_0$ (in the relative topology on $N_0$ inherited from $M_l^\circ$)
\end{itemize}
such that
\begin{itemize}
\item $T$ is a $C^\omega$ submanifold of $N_0$ and
\item the map $(g,t)\mapsto gt:G^\circ\times T\to N_0$ is a $C^\omega$ diffeomorphism.
\end{itemize}
\end{cor}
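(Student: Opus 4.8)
The plan is to combine \pref{prop-free-proper-in-prolong} with \lref{lem-free-proper-smooth}. First I would invoke \pref{prop-free-proper-in-prolong} to fix an integer $l\ge0$ and a nonempty $G^\circ$-invariant open subset $N$ of $M_l^\circ$ on which the $G^\circ$-action is free and proper. As noted in the setup of \secref{sect-real-alg-act}, $G^\circ$ is a $C^\omega$ Lie group and $M_l$, being a smooth real algebraic variety, is a $C^\omega$ manifold, with the $G$-action (hence the $G^\circ$-action) on it being $C^\omega$. Since $M_l^\circ$ is open in $M_l$ and $N$ is open in $M_l^\circ$, the set $N$ is an open submanifold of $M_l$, so $N$ is itself a $C^\omega$ manifold and the $G^\circ$-action on $N$ is a $C^\omega$ action that is free and proper.

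Next I would apply \lref{lem-free-proper-smooth} with $G$ replaced by $G^\circ$ and with the manifold $N$ in the role of $N$ there. That lemma requires only a $C^\omega$ Lie group acting $C^\omega$-ly, freely and properly on a $C^\omega$ manifold, all of which we have just verified. Fixing any point $x\in N$ (which exists since $N\ne\emptyset$), \lref{lem-free-proper-smooth} produces a $G^\circ$-invariant open neighborhood $N_0$ in $N$ of $x$ and a closed subset $T$ of $N_0$ (closed in the relative topology on $N_0$ inherited from $N$) such that $T$ is a $C^\omega$ submanifold of $N_0$ and $(g,t)\mapsto gt:G^\circ\times T\to N_0$ is a $C^\omega$ diffeomorphism.

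Finally I would reconcile the ambient-space bookkeeping so that the conclusion is phrased relative to $M_l^\circ$ as the corollary demands. Since $N_0$ is open in $N$ and $N$ is open in $M_l^\circ$, the set $N_0$ is a nonempty $G^\circ$-invariant open subset of $M_l^\circ$. Because $N_0$ is open in $M_l^\circ$, the relative topology on $N_0$ inherited from $N$ coincides with the relative topology on $N_0$ inherited from $M_l^\circ$ (both equal the intrinsic topology of the open subset $N_0$), so $T$ is closed in $N_0$ in the sense required, and $T$ remains a $C^\omega$ submanifold of $N_0$ with the same diffeomorphism $G^\circ\times T\to N_0$. This gives exactly the asserted data, completing the proof.

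There is essentially no obstacle here: the corollary is a direct packaging of \pref{prop-free-proper-in-prolong} through \lref{lem-free-proper-smooth}. The only point requiring a word of care is the compatibility of relative topologies in the last paragraph—that passing from $N$ to the open subset $M_l^\circ$ (or vice versa) does not change what ``closed subset of $N_0$'' or ``$C^\omega$ submanifold of $N_0$'' means—but this is immediate from openness of $N_0$ in both ambient spaces.
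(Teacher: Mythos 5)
Your proposal is correct and follows exactly the paper's own route: invoke \pref{prop-free-proper-in-prolong} to obtain $l$ and $N$, then apply \lref{lem-free-proper-smooth} with $G$ replaced by $G^\circ$ at any point $x\in N$. The paper's proof is just a terser version of yours; your extra care about the coincidence of relative topologies (since $N_0$ is open in both $N$ and $M_l^\circ$) is a harmless and correct elaboration.
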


\begin{proof}
Choose $l$ and $N$ as in \pref{prop-free-proper-in-prolong}.
The $G^\circ$-action on $N$ is then $C^\omega$, free and proper.
Fix any $x\in N$.
The result then follows from \lref{lem-free-proper-smooth} (with $G$ replaced by $G^\circ$).
\end{proof}

The pair $(T,N_0)$ above is sometimes called a ``moving frame''.


\bibliography{list}

\end{document}